\documentclass[11pt, reqno]{amsart}
\usepackage{a4wide}
\usepackage{amssymb}
\usepackage{tikz} 
\usepackage{bbm}
\usepackage
[colorlinks = true,
            linkcolor = blue,
            urlcolor  = blue,
            citecolor = blue,
            anchorcolor = blue]{hyperref}
\usepackage[initials]{amsrefs}

\newtheorem{thm}{Theorem}[section]
\newtheorem{lem}[thm]{Lemma}
\newtheorem{prop}[thm]{Proposition}
\newtheorem{obs}[thm]{Observation}
\newtheorem{rem}[thm]{Remark}

\begin{document}
\title{The Scaled Polarity transform and related inequalities}

\author[S. Gilboa]{Shoni Gilboa}
\address[S. Gilboa]{Department of Mathematics and Computer Science,\newline\indent
The Open University of Israel,  Ra’anana, Israel}

\author[A. Segal]{Alexander Segal}
\address[A. Segal]{Department of Mathematics,\newline\indent
Afeka Academic College of Engineering, Tel Aviv, Israel}
\email{segalale@gmail.com}

\author[B. A. Slomka]{Boaz A. Slomka}
\address[B. A. Slomka]{Department of Mathematics and Computer Science, \newline\indent The Open University of Israel, Ra’anana, Israel}
\email{slomka@openu.ac.il}

\begin{abstract}
In this paper we deal with generalizations of the Mahler volume product for log-concave functions. We show that the polarity transform $\mathcal A$ can be rescaled so that the Mahler product it induces has upper and lower bounds of the same asymptotics.  We discuss a similar result for the $\mathcal J$ transform.
As an application, we extend the K{\"o}nig-Milman duality of entropy result to the class of geometric log-concave functions. 
\end{abstract}

\maketitle

\section{Introduction}
\subsection{Background}
Given a convex body $K$, the Mahler volume of $K$ is given by
\[
\mathcal{P}(K) =
\inf
\left\{
{\rm Vol}_n(K - x) \cdot {\rm Vol}_n((K - x)^\circ) \,:\, x\in K
\right\},
\]
where $K^\circ$ denotes the dual body of $K$. That is,
\[
K^\circ =
\left\{
y\in{\mathbb R}^n \,:\, \forall x\in K,\langle x,y\rangle\le 1
\right\}.
\]
The classical Blaschke-Santal\'{o} inequality states that the Mahler
volume is maximized by ellipsoids.
Blaschke was the first to prove this fact in dimensions $2$ and $3$. Later, Santal\'{o} \cite{Sant}
extended this inequality to every dimension (see \cite{MeyPaj} for a simple proof
due to Meyer and Pajor). 
Finding the minimal value of the
Mahler volume remains an open problem to this day. Mahler conjectured
that for centrally symmetric bodies, the Mahler volume attains its minimum value on cubes, i.e. $\mathcal{P}(K)\ge \mathcal{P}(B_\infty^n) = \frac{4^n}{n!}$. In the general case, where the body $K$ is not assumed
to be centrally symmetric, minimizers are conjectured to be centered simplices. Mahler \cite{Mahler} solved the planar case, and the centrally symmetric three dimensional case has been settled by Iriyeh and Shibata \cite{IriShi}, see also \cite{FHMRZ} for a simplified proof.

The Mahler conjecture would imply that the minimum and
maximum of $\mathcal{P}(K)$ differ only by a factor of $c^n$ for some universal constant $c>0$ or, in other words, that $\mathcal{P}(K)^{\frac{1}{n}}$ is of the order $\frac{1}{n}$ for any convex body. The latter was verified in the Bourgain-Milman theorem \cite{BM}, where it was
proved that there exists a universal constant $c>0$ such that
\begin{equation*}\label{Ineq-Bour-Mil}
c \le \left( \frac{\mathcal{P}(K)}{\mathcal{P}(B_2^n)} \right) ^\frac1n,
\end{equation*}
thus settling the Mahler conjecture in the asymptotic sense. In
recent years, several new proofs of the Bourgain-Milman inequality
have been discovered, by Kuperberg \cite{Kup}, Nazarov \cite{Naz},
and by Giannopoulos, Paouris, and Vritsiou \cite{GPV}; see also the survey  \cite{fradelizi2023volume} and references therein.

Functional analogues of the Mahler volume and the classical  inequalities associated with it, have also been extensively studied. Perhaps the most prominent one pertains to the class of convex functions from ${\mathbb R}^n$ to ${\mathbb R}\cup\{\infty\}$, which we denote by ${\rm Cvx}({\mathbb R}^n)$. 
The analog of volume of a
convex function $\varphi$ is usually defined to be the integral
$\int_{{\mathbb R}^n} e^{-\varphi}$, and the dual of $\varphi$ is obtained by
applying the Legendre transform
\[
({\mathcal L}\varphi)(y) = \sup_{x\in{\mathbb R}^n} (\langle x,y\rangle-\varphi(x)).
\]
The following infimum over translations is a functional analogue of
the Mahler volume.
\[
\mathcal{P}_{\mathcal L}(\varphi) = \inf_{a \in {\mathbb R}^n}
\left\{\int_{{\mathbb R}^n} e^{-\varphi\circ\tau_a}\int_{{\mathbb R}^n} e^{-{\mathcal L} (\varphi\circ\tau_a)}
\right\},
\]
where $\tau_a$ is the translation $x\mapsto x+a$ of ${\mathbb R}^n$. It is known that 
\begin{equation}\label{eq-ML}
c_{\mathcal L}^n \le \mathcal{P}_{\mathcal L}(\varphi) \le \mathcal{P}_{\mathcal L}\left( \frac{|\cdot|^2}{2} \right) =
(2\pi)^n, 
\end{equation}
where $c_{\mathcal L}>0$ is some universal constant and $\lvert\cdot\rvert$ is the standard Euclidean norm in ${\mathbb R}^n$.
The upper bound for $\mathcal{P}_{\mathcal L}$ is due to Ball 
\cite{Ball2} for even functions, and to Artstein, Klartag, and Milman \cite{AKM} for any convex function.
The lower bound for $\mathcal{P}_{\mathcal L}$ was established by Klartag and Milman in \cite{KM} for even functions, and by Fradelizi and Meyer in
\cite{FradMey} for all functions.

Note that, as the Mahler volume of convex bodies, the quantity $\mathcal{P}_{\mathcal L}(\varphi)^{\frac{1}{n}}$ is of the same order for any convex function. This fact is very useful, as it allows one to use duality in proving various inequalities on convex bodies and functions, without losing optimality, in the isomorphic sense. Notable examples are K\"{o}nig and Milman's metric entropy estimates \cite{KonigMilman87} and the result of Nasz\'odi, Nazarov and Ryabogin on fine approximation of convex bodies by poltyopes \cite{NNR17}.

For a large family of convex
functions there is another choice of duality besides the Legendre
transform, namely the polarity transform $\mathcal A$, which first appeared
in \cite{Rockafellar}. 
Let ${\rm Cvx}_0({\mathbb R}^n)$ be the class of \emph{geometric convex functions}, i.e., lower semi-continuous convex functions from ${\mathbb R}^n$ to $[0,\infty]$
vanishing at the origin.
Artstein and Milman proved in \cite{AM-Hidden}
that the only dualities (i.e. order reversing
involutions) of ${\rm Cvx}_0({\mathbb R}^n)$ are the Legendre transform $\mathcal L$ and the polarity
transform $\mathcal A$, which may be defined by
\[
({\mathcal A}\varphi)(y) = \sup_{x\in{\mathbb R}^n} \frac{\langle x,y\rangle-1}{\varphi(x)}.
\]
One may similarly consider the Mahler volume with respect to $\mathcal A$,
\[
\mathcal{P}_{\mathcal A}(\varphi) = \int_{{\mathbb R}^n} e^{-\varphi}\int_{{\mathbb R}^n} e^{-{\mathcal A} \varphi},
\]
for $\varphi$ in the class of functions
$$
\overline{\text{Cvx}}_{0}\left({\mathbb R}^{n}\right):=\left\{\varphi\in{\rm Cvx}_0({\mathbb R}^n)\,:\,0<\int_{{\mathbb R}^n} e^{-\varphi}<\infty\right\}.
$$
It was shown in \cite{ArtSlom} for even functions, that
\begin{equation}\label{eq-MA-upper-lower}
c^n \left({\rm Vol}_n\left(B_2^n\right)\right)^2
\le \mathcal{P}_{\mathcal A}(\varphi) \le
\left({\rm Vol}_n\left(B_2^n\right) n! \right)^2 \left(1+\frac{C}{n}\right)
\end{equation}
for some universal constants $c,C>0$.
Notice that unlike the previous cases, the lower and upper bounds for $(\mathcal{P}_{\mathcal A})^{1/n}$ are of different orders.
The existence of maximizers and minimizers of $\mathcal{P}_{\mathcal A}$ was recently shown \cite{Li}, however their exact form remains an open problem. 
Notice that the bounds in \eqref{eq-MA-upper-lower} are essentially tight, 
as $\mathcal{P}_{\mathcal A}(\lvert \cdot \rvert) = (n!{\rm Vol}(B_2^n))^2$ and $\mathcal{P}_{\mathcal A}(1^{\infty}_{B_2^n})=({\rm Vol}(B_2^n))^2$ (see below the definition of  $1^{\infty}_K$).

The polarity transform $\mathcal A$ commutes with the Legendre transform $\mathcal L$ (see, e.g., \cite{AM-Hidden}).
The composition ${\mathcal J} = {\mathcal A} {\mathcal L} =
{\mathcal L} {\mathcal A}\,$ of the two order reversing involutions $\mathcal L$ and $\mathcal A$ is an
order preserving involution (see \cite{AM-Hidden} and \cite{AFM} for
more details on the $\mathcal J$ transform). Since $\mathcal J$ is order preserving,
the product $\int_{{\mathbb R}^n} e^{-\varphi} \int_{{\mathbb R}^n} e^{-{\mathcal J} \varphi}$ cannot be
bounded from above, or bounded away from zero. Thus, it makes sense
to consider the ratio
\[
\mathcal{R}_{\mathcal J}(\varphi) = \frac{\int_{{\mathbb R}^n} e^{-{\mathcal J} \varphi}}{\int_{{\mathbb R}^n} e^{-\varphi}}
\]
for any $\varphi\in\overline{\text{Cvx}}_{0}\left({\mathbb R}^{n}\right)$ (in which case ${\mathcal J}\varphi\in\overline{\text{Cvx}}_{0}\left({\mathbb R}^{n}\right)$ as well).
In \cite{FlorentinSegal} it was shown that there exists a universal constant $C>0$ such that for any $\varphi \in \overline{\text{Cvx}}_{0}\left({\mathbb R}^{n}\right)$,
\begin{equation} \label{eq:JMhalerUpperLower}
\frac{1}{\left(1+\frac{C}{n}\right)n!} \leq \mathcal{R}_{\mathcal J}(\varphi) \leq \left(1+\frac{C}{n}\right)n!
\end{equation}
and the upper and lower bounds are achieved by functions of the form
\[
\max\left\{1_K^\infty,\, a||\cdot||_{K}\right\},
\]
where $||\cdot||_{K}$ is the Minkowski functional of the body $K$,
and the convex indicator function $1_K^\infty$ is defined to be $0$
on $K$ and $+\infty$ otherwise. 
Notice, that once again, unlike the first two cases, the lower and upper bounds of $(\mathcal{R}_{\mathcal J})^{\frac{1}{n}}$ are of different orders of magnitude.

\subsection{Main results}

The aim of this note is to show, that the polarity transform $\mathcal A$ and the gauge transform $\mathcal J$ can be scaled so that both $(\mathcal{P}_{\mathcal A})^{\frac{1}{n}}$ and $(\mathcal{R}_{\mathcal J})^{\frac{1}{n}}$ have upper and lower bounds of the same magnitude. 
To this end, we introduce for any $\alpha\in(0,\infty)$ the scaled polarity transform 
\[
\varphi\mapsto{\mathcal A}_{\alpha}\varphi=\alpha{\mathcal A}\varphi
\]
and consider the Mahler volume with respect to ${\mathcal A}_{\alpha}$:
\[
\mathcal{P}_{{\mathcal A}_{\alpha}}(\varphi) = \int_{{\mathbb R}^n} e^{-\varphi}\int_{{\mathbb R}^n} e^{-{\mathcal A}_{\alpha} \varphi}.
\]

It is easy to check that this scaling of $\mathcal A$ does not affect its property of involution. However,  while it holds that ${\mathcal J} = {\mathcal A} {\mathcal L} = {\mathcal L} {\mathcal A}$, the transforms ${\mathcal A}_{\alpha}$ and $\mathcal L$ do not commute for $\alpha\neq 1$. Thus, their composition induces two order preserving transforms:

\[
{\mathcal J}_{\alpha}^l= {\mathcal A}_{\alpha}{\mathcal L}=\alpha {\mathcal J}\text{ and } {\mathcal J}_{\alpha}^r = {\mathcal L}{\mathcal A}_{\alpha}.
\]
Hence, one may consider the Santal\'o ratio for each:
\[
{\mathcal R}_{{\mathcal J}_{\alpha}^r}(\varphi) = \frac{\int_{{\mathbb R}^n} e^{-{\mathcal J}_{\alpha}^r\varphi}}{\int_{{\mathbb R}^n} e^{-\varphi}}, \qquad {\mathcal R}_{{\mathcal J}_{\alpha}^l}(\varphi) = \frac{\int_{{\mathbb R}^n} e^{-{\mathcal J}_{\alpha}^l\varphi}}{\int_{{\mathbb R}^n} e^{-\varphi}}.
\]
Note that for every $\varphi\in{\rm Cvx}_0({\mathbb R}^n)$, it holds that 
\begin{equation*}
({\mathcal J}_{\alpha}^r\varphi)(x)=\alpha({\mathcal J}\varphi)(x/\alpha)=({\mathcal J}_{\alpha}^l\varphi)(x/\alpha)
\end{equation*}
for every $x\in{\mathbb R}^n$, and so ${\mathcal R}_{{\mathcal J}_{\alpha}^r}=\alpha^n {\mathcal R}_{{\mathcal J}_{\alpha}^l}$. 
Thus, it suffices to  consider only one of these Santal\'o ratios.

Let ${\mathcal K}_0\left({\mathbb R}^n\right)$ denote the family of convex bodies (i.e., compact convex sets with non-empty interior) in ${\mathbb R}^n$ which contain the origin in their interior.
For every $\alpha\in(0,\infty)$ and every $K\in{\mathcal K}_0\left({\mathbb R}^n\right)$, it holds that
\begin{equation}
\label{eq:sl-norm-K}
{\mathcal R}_{{\mathcal J}_{\alpha}^l}\left(\lVert\cdot\rVert_K\right)=\frac{\int_{{\mathbb R}^n} e^{-\alpha 1^{\infty}_K}}{\int_{{\mathbb R}^n} e^{-\lVert\cdot\rVert_K}}=\frac{{\rm Vol}_n(K)}{\int_{{\mathbb R}^n} e^{-\lVert\cdot\rVert_K}}=\frac{1}{n!}.
\end{equation}

For every positive integer $n$, let $\rho_n$ be the unique real number in the interval $(0,\frac{1}{4})$ satisfying
$\left(1-\sqrt{1-4\rho_n}\right)\rho_n^{-\frac{1}{n+2}}e^{\sqrt{1-4\rho_n}}=\left(2\sqrt[n]{n!}/(n+2)\right)^{\frac{n}{n+2}}$
(see Lemma \ref{lem:rhon}).

\begin{thm}\label{thm:exact-Jl}
For every $\alpha\in[\rho_n (n+2)^2,\infty)$, the following statements hold.
\begin{enumerate}
\item
For every $\varphi \in \overline{\text{Cvx}}_{0}\left({\mathbb R}^{n}\right)$, we have ${\mathcal R}_{{\mathcal J}_{\alpha}^l}(\varphi)\leq\frac{1}{n!}$;
moreover, ${\mathcal R}_{{\mathcal J}_{\alpha}^l}(\varphi)=\frac{1}{n!}$ if and only if $\varphi=\lVert\cdot\rVert_K$ for some  $K\in{\mathcal K}_0\left({\mathbb R}^n\right)$.
\item
For every $\varphi \in \overline{\text{Cvx}}_{0}\left({\mathbb R}^{n}\right)$, we have ${\mathcal R}_{{\mathcal J}_{\alpha}^l}(\varphi)\geq\frac{n!}{\alpha^n}$;
moreover, ${\mathcal R}_{{\mathcal J}_{\alpha}^l}(\varphi)=\frac{n!}{\alpha^n}$ if and only if $\varphi={\mathcal J}\lVert\cdot\rVert_K=1^{\infty}_K$ for some $K\in{\mathcal K}_0\left({\mathbb R}^n\right)$.
\end{enumerate}
\end{thm}

For every $K\in{\mathcal K}_0\left({\mathbb R}^n\right)$,
$r\in[0,1]$ and $t_0\in(0,\infty)$, 
let $\psi_{K, r, t_0}\in\overline{\text{Cvx}}_{0}\left({\mathbb R}^{n}\right)$ be the function whose epi-graph (see Section \ref{sec:pre} for the definition of the epi-graph of a function) is the convex hull of the union of the epi-graph of $\|\cdot\|_{rK}$ and the epi-graph of $\max\left\{\|\cdot\|_{K}, 1^{\infty}_{t_0 K} \right\}$.
It holds that $\psi_{K, r, t_0}\leq\min\left\{\|\cdot\|_{rK}, \max\left\{\|\cdot\|_{K}, 1^{\infty}_{t_0 K} \right\}\right\}$ and, moreover, $\psi_{K, r, t_0}$ is the maximal convex function with this property, in the sense that $\psi_{K, r, t_0}\geq\varphi$ for every convex $\varphi:{\mathbb R}^n\to{\mathbb R}$ satisfying $\varphi\leq\min\left\{\|\cdot\|_{rK}, \max\left\{\|\cdot\|_{K}, 1^{\infty}_{t_0 K} \right\}\right\}$.
Explicitly,
\begin{subequations}
\label{eq:psi-def}
\begin{equation}
\label{eq:psi-def:a}
\psi_{K,0,t_0}=\max\left\{\|\cdot\|_{K}, 1^{\infty}_{t_0 K} \right\},
\end{equation}
and for $r\in(0,1]$, 
\begin{equation}
\label{eq:psi-def:b}
\psi_{K, r, t_0}=\max\left\{\|\cdot\|_{K},
\frac{1}{r}\|\cdot\|_{K}-\frac{1-r}{r}t_0\right\};
\end{equation}
\end{subequations}
in particular,
$\psi_{K,1,t_0}=\lVert\cdot\rVert_K$ (for every $t_0$).

\begin{thm}
\label{thm:tight-Jl}
There are universal constants $C,c>0$ such that for every $\alpha\in(1,\rho_n (n+2)^2)$ 
there are $r\in[0,1]$, $t_0\in(0,\infty)$ and 
$$1+\frac{c}{n}\leq\gamma\leq Cn\sqrt{n}\, e^{\frac{2\alpha}{n}}
$$
such that for every $K\in{\mathcal K}_0\left({\mathbb R}^n\right)$ and every $\varphi\in\overline{\text{Cvx}}_{0}\left({\mathbb R}^{n}\right)$,
$$
{\mathcal R}_{{\mathcal J}_{\alpha}^l}(\varphi)\leq {\mathcal R}_{{\mathcal J}_{\alpha}^l}\left(\psi_{K,r,t_0}\right)
=\frac{\gamma n!}{\alpha^n}$$
and 
$$
{\mathcal R}_{{\mathcal J}_{\alpha}^l}(\varphi)\geq{\mathcal R}_{{\mathcal J}_{\alpha}^l}\left({\mathcal J}_{\alpha}^l\psi_{K,r,t_0}\right)
=\frac{1}{\gamma n!}.$$
\end{thm}

For $\varphi\in\overline{\text{Cvx}}_{0}\left({\mathbb R}^{n}\right)$, denote the barycenter of $e^{-\varphi}$ with respect
to the Lebesgue measure by 
${\rm bar}\left(e^{-\varphi}\right):=\int_{{\mathbb R}^n} e^{-\varphi(x)}x\,{\rm d}x/\int_{{\mathbb R}^n} e^{-\varphi(x)}\,{\rm d}x$.

\begin{thm}
\label{thm:Lc-Santalo}
For every $\alpha\in[\rho_n(n+2)^2,\infty)$ and every $\varphi \in \overline{\text{Cvx}}_{0}\left({\mathbb R}^{n}\right)$ for which ${\rm bar}\left(e^{-\varphi}\right)=0$, we have
\begin{equation}
\label{eq:Lc-Santalo-large-alpha}
c_{\mathcal L}\frac{\sqrt[n]{n!}}{\alpha}\le
\left(\mathcal{P}_{{\mathcal A}_{\alpha}}(\varphi)\right)^{\frac{1}{n}} 
\le \frac{2\pi}{\sqrt[n]{n!}},
\end{equation}
where $c_{\mathcal L}$ is as in \eqref{eq-ML}.
Additionally, there is a universal constant $C>0$ such that for every $\alpha\in(1,\rho_n(n+2)^2)$ there is 
$
0\leq\delta\leq C\max\left\{\frac{\ln n}{n},\frac{\alpha}{n^2}\right\}
$
such that 
\begin{equation}
\label{eq:Lc-Santalo-small-alpha}
\frac{c_{\mathcal L}}{(1+\delta)\sqrt[n]{n!}}\le
\left(\mathcal{P}_{{\mathcal A}_{\alpha}}(\varphi)\right)^{\frac{1}{n}} 
\le \left(1+\delta\right)2\pi\frac{\sqrt[n]{n!}}{\alpha}
\end{equation}
for every $\varphi \in \overline{\text{Cvx}}_{0}\left({\mathbb R}^{n}\right)$ for which ${\rm bar}\left(e^{-\varphi}\right)=0$.

Moreover, for every $\alpha\in(0,\infty)$ and every even $\varphi\in\overline{\text{Cvx}}_{0}\left({\mathbb R}^{n}\right)$ it holds that
\begin{equation*}
\left(\mathcal{P}_{{\mathcal A}_{\alpha}}\left(\varphi\right)\right)^{\frac{1}{n}}\leq \sqrt[n]{n!}\left({\rm Vol}_n\left(B_{2}^{n}\right)\right)^{\frac{2}{n}}\left(\frac{1}{\sqrt[n]{n!}}+\frac{\sqrt[n]{n!}}{\alpha}\right)
\end{equation*}
(note that 
$$\sqrt[n]{n!}\left({\rm Vol}_n\left(B_2^n\right)\right)^{\frac{2}{n}}=2\pi-\Theta\left(\frac{\ln n}{n}\right),$$
i.e., 
$2\pi-C_0\frac{\ln n}{n}<\sqrt[n]{n!}\left({\rm Vol}_n\left(B_{2}^{n}\right)\right)^{\frac{2}{n}}<2\pi-c_0\frac{\ln n}{n}$
for some universal constants $C_0,c_0>0$).
\end{thm}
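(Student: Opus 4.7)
The plan is to reduce both inequalities in the barycenter-zero case to the bounds of Theorems \ref{thm:exact-Jl} and \ref{thm:tight-Jl} through a factorization, and to treat the even case separately via a direct level-set Blaschke-Santal\'o argument. The identity
$$s_{\A_{\alpha}}(\varphi)=\left(\irn e^{-\varphi}\irn e^{-\L\varphi}\right)\sl(\L\varphi)$$
follows immediately from $\Jl(\L\varphi)=\sAA\L\L\varphi=\alpha\A\varphi$, which gives $\irn e^{-\alpha\A\varphi}=\sl(\L\varphi)\irn e^{-\L\varphi}$; standard Legendre duality ensures $\L\varphi\in\ocvxo$ whenever $\varphi\in\ocvxo$.

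Under the hypothesis $\bry{e^{-\varphi}}=0$, the first factor is controlled by the functional Blaschke-Santal\'o inequalities: the upper bound $\irn e^{-\varphi}\irn e^{-\L\varphi}\le(2\pi)^n$ is the theorem of Artstein-Klartag-Milman \cite{AKM}, while the lower bound $\irn e^{-\varphi}\irn e^{-\L\varphi}\ge s_{\L}(\varphi)\ge c_{\L}^n$ follows from \cite{FradMey} (recalling that $s_{\L}$ is the infimum over translations, so any unshifted product dominates it). Applying Theorem \ref{thm:exact-Jl} to $\L\varphi$ in the regime $\alpha\ge\rho_n(n+2)^2$ gives $n!/\alpha^n\le\sl(\L\varphi)\le 1/n!$; multiplying the two pairs of bounds and taking $n$-th roots delivers \eqref{eq:Lc-Santalo-large-alpha}. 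For $\alpha\in(1,\rho_n(n+2)^2)$ one applies Theorem \ref{thm:tight-Jl} instead to obtain $1/(\gamma n!)\le\sl(\L\varphi)\le\gamma n!/\alpha^n$, and the estimate
$$\gamma^{1/n}\le\left(Cn^{3/2}e^{2\alpha/n}\right)^{1/n}=C^{1/n}n^{3/(2n)}e^{2\alpha/n^2}=1+O\!\left(\max\left\{\tfrac{\ln n}{n},\tfrac{\alpha}{n^2}\right\}\right)$$
identifies a valid $\delta$ and yields \eqref{eq:Lc-Santalo-small-alpha}.

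The even bound needs a more direct argument, since the identity route would only reproduce the first summand. Since $(\A\varphi)(y)\le t$ is equivalent to $\iprod{x}{y}\le 1+t\varphi(x)$ for every $x$, restricting to $x\in\{\varphi\le s\}$ gives the inclusion $\{\A\varphi\le t\}\sub(1+st)\{\varphi\le s\}^{\circ}$. Since $\varphi$ is even, each $\{\varphi\le s\}$ is centrally symmetric, so the classical Blaschke-Santal\'o inequality yields
$$\vol_n\!\left(\{\varphi\le s\}\right)\vol_n\!\left(\{\A\varphi\le t/\alpha\}\right)\le(1+st/\alpha)^n\vol_n(B_2^n)^2.$$
Writing both factors of $s_{\A_{\alpha}}(\varphi)$ by the layer-cake formula, applying Fubini, and inserting the above estimate produces
$$s_{\A_{\alpha}}(\varphi)\le\vol_n(B_2^n)^2\int_0^{\infty}\!\!\int_0^{\infty}\left(1+\tfrac{st}{\alpha}\right)^n e^{-s-t}\d s\d t=\vol_n(B_2^n)^2\sum_{k=0}^{n}\binom{n}{k}\frac{(k!)^2}{\alpha^k}.$$

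The concluding combinatorial step is the inequality $(k!)^{1/k}\le(n!)^{1/n}$ for every $0\le k\le n$, which holds because $k\mapsto\tfrac{1}{k}\sum_{j=1}^k\ln j$ is nondecreasing. Term-by-term comparison then gives $\sum_{k=0}^n\binom{n}{k}(k!)^2/\alpha^k\le\sum_{k=0}^n\binom{n}{k}((n!)^{2/n}/\alpha)^k=(1+(n!)^{2/n}/\alpha)^n$, whence
$$\left(s_{\A_{\alpha}}(\varphi)\right)^{1/n}\le\vol_n(B_2^n)^{2/n}\left(1+\tfrac{(n!)^{2/n}}{\alpha}\right)=\vol_n(B_2^n)^{2/n}\sqrt[n]{n!}\left(\tfrac{1}{\sqrt[n]{n!}}+\tfrac{\sqrt[n]{n!}}{\alpha}\right),$$
matching the stated bound, and the asymptotic form $\vol_n(B_2^n)^{2/n}\sqrt[n]{n!}=2\pi-\Theta(\ln n/n)$ follows from Stirling. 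I expect the main technical subtleties to be the verification that $\L\varphi\in\ocvxo$ (so that Theorems \ref{thm:exact-Jl} and \ref{thm:tight-Jl} apply) and the handling of level sets $\{\varphi\le s\}$ or $\{\A\varphi\le t/\alpha\}$ that may be unbounded or have empty interior; both are standard approximation matters.
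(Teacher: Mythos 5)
Your proof is correct and follows essentially the same route as the paper's: you reproduce the factorization $s_{\A_{\alpha}}(\varphi)=\bigl(\irn e^{-\varphi}\irn e^{-\L\varphi}\bigr)\sl(\L\varphi)$ (the paper's identity \eqref{eq:sAsJsL}), control the first factor via Artstein--Klartag--Milman and Fradelizi--Meyer and the second via Theorems \ref{thm:exact-Jl} and \ref{thm:tight-Jl}, and handle the even case exactly as in the paper's Proposition \ref{prop:Lc-Santalo-upper-even} -- via the level-set inclusion \eqref{eq:A-level-sets}, the symmetric Blaschke--Santal\'o inequality, Fubini, and the monotonicity of $k\mapsto(k!)^{1/k}$. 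The only cosmetic difference is that you derive the inclusion $\{\A\varphi\le t\}\subseteq(1+st)\{\varphi\le s\}^{\circ}$ from the definition of $\A$ rather than citing \cite{ArtSlom}, and you make the $\gamma^{1/n}\le 1+\delta$ estimate explicit where the paper leaves it implicit.
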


In particular, Theorem \ref{thm:Lc-Santalo} implies that if $\alpha$ is of the order of $n^2$, then
$\left(\mathcal{P}_{{\mathcal A}_{\alpha}}\left(\varphi\right)\right)^{1/n}$
is of order $1/n$ for every $\varphi \in \overline{\text{Cvx}}_{0}\left({\mathbb R}^{n}\right)$ for which ${\rm bar}\left(e^{-\varphi}\right)=0$.

\begin{rem}\label{rem:bry-assumption-Atheorem}
The lower bounds in Theorem \ref{thm:Lc-Santalo} are valid without assuming that ${\rm bar}\left(e^{-\varphi}\right)=0$ (see the proof in \S\ref{sec:A}). 
However, the requirement of ${\rm bar}\left(e^{-\varphi}\right)=0$ is necessary for the upper bounds in the theorem, since otherwise one may consider the indicator of a convex set $K$ with $0\in \partial K$ so that $\mathcal{P}_{{\mathcal A}_{\alpha}}(1_K^\infty)$ is unbounded.
\end{rem}

The upper bounds in Theorem \ref{thm:Lc-Santalo} are essentially tight and up to constants, the lower bounds are tight as well.
Indeed, for every $\alpha\in(0,\infty)$,
\begin{align*}
\left(\mathcal{P}_{{\mathcal A}_{\alpha}}\left(1^{\infty}_{B_2^n}\right)\right)^{\frac{1}{n}}&=\left(\int_{{\mathbb R}^n} e^{-1^{\infty}_{B_2^n}}\right)^{\frac{2}{n}}=\left({\rm Vol}_n\left(B_2^n\right)\right)^{\frac{2}{n}}
\\&=\sqrt[n]{n!}\left({\rm Vol}_n\left(B_2^n\right)\right)^{\frac{2}{n}}\frac{1}{\sqrt[n]{n!}}
=\left(2\pi-\Theta\left(\frac{\ln n}{n}\right)\right)\frac{1}{\sqrt[n]{n!}}
\end{align*}
and
\begin{align*}
\left(\mathcal{P}_{{\mathcal A}_{\alpha}}\left(\lvert\cdot\rvert\right)\right)^{\frac{1}{n}}&=\left(\int_{{\mathbb R}^n} e^{-\lvert\cdot\rvert}\cdot\int_{{\mathbb R}^n} e^{-\alpha\lvert\cdot\rvert}\right)^{\frac{1}{n}}=\frac{1}{\alpha}\left(\int_{{\mathbb R}^n} e^{-\lvert\cdot\rvert}\right)^{\frac{2}{n}}=\frac{1}{\alpha}\left(n!{\rm Vol}_n\left(B_2^n\right)\right)^{\frac{2}{n}}\\
&=\sqrt[n]{n!}\left({\rm Vol}_n\left(B_2^n\right)\right)^{\frac{2}{n}}\frac{\sqrt[n]{n!}}{\alpha}=\left(2\pi-\Theta\left(\frac{\ln n}{n}\right)\right)\frac{\sqrt[n]{n!}}{\alpha}.
\end{align*}

\subsection{An application}

As an application of Theorem \ref{thm:Lc-Santalo}, we provide a new functional generalization
of K\"{o}nig-Milman's duality of metric entropy result \cite{KonigMilman87}, which we recall next.

Let $K,T\subseteq{\mathbb R}^n$ be convex bodies. The covering number of $K$ by $T$ is given by 
$N(K,T)=\min\{N\in {\mathbb N}\,:\,\exists x_1,\dots,x_N\in{\mathbb R}^n,\,\,K\subseteq\bigcup_{i=1}^N(x_i+T)\}$. 
In \cite{KonigMilman87}, it was proven that if $K$ and $T$ are centrally symmetric then 
\begin{equation}\label{eq:KM_duality_bodies}
c^nN(T^\circ,K^\circ)\le N(K,T)\le C^nN(T^\circ,K^\circ)
\end{equation}
for some universal constants $c,C>0$. As shown in \cite{PajorMilman2000}, the same inequality holds under the weaker assumptions that $K$ (or $K^\circ$) and $T$ (or $T^\circ$) have their barycenters at the origin. 

Given measurable functions $f, g:{\mathbb R}^n\to[0,\infty)$,  the covering number $N(f,g)$ of $f$
by $g$ was defined in \cite{ArtSlom17-func_cov} as the infimum of $\mu({\mathbb R}^n)$
over all non-negative Borel measures $\mu$ on ${\mathbb R}^n$ satisfying $\mu*g\geq f$. Using both inequalities in \eqref{eq-ML}, a functional version of \eqref{eq:KM_duality_bodies} was given in \cite{ArtSlom17-func_cov} for geometric log-concave functions, where duality is induced by the Legendre transform. 

Using Theorem \ref{thm:Lc-Santalo}, we prove the following.

\begin{thm}
\label{thm:KM_polarity} Let $\varphi,\psi\in\overline{\text{Cvx}}_{0}\left({\mathbb R}^{n}\right)$ and fix some $\alpha$ of order $n^2$. Suppose that either ${\rm bar}\left(\varphi\right)=0$ or ${\rm bar}\left({\mathcal A}_\alpha\varphi\right)=0$,  and either ${\rm bar}\left(\psi\right)=0$ or ${\rm bar}\left({\mathcal A}_\alpha\psi\right)=0$. Then for some universal
constants $c,C>0$ we have 
\[
c^{n}N\left(e^{-{\mathcal A}_\alpha\psi},e^{-{\mathcal A}_\alpha\varphi}\right)\le N\left(e^{-\varphi}, e^{-\psi}\right)\le C^{n}N\left(e^{-{\mathcal A}_\alpha\psi},e^{-{\mathcal A}_\alpha\varphi}\right).
\]
\end{thm}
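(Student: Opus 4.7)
The approach is to transpose the K\"onig--Milman duality proof from the Legendre setting of \cite{ArtSlom17-func_cov} to the polarity setting, replacing the role of the Santal\'o bounds \eqref{eq-ML} with Theorem \ref{thm:Lc-Santalo}. Two structural ingredients from functional covering theory drive the argument: the trivial lower bound $N(f,g)\ge \irn f/\irn g$, obtained by integrating the defining inequality $\mu\ast g\ge f$ against Lebesgue measure; and a Rogers--Shephard-type upper bound $N(e^{-\varphi},e^{-\psi})\le A^n\irn e^{-\varphi}/\irn e^{-\psi}$ for geometric log-concave $\varphi,\psi$ in the appropriate position, as established in \cite{ArtSlom17-func_cov}.

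Combining these two bounds on the pairs $(\varphi,\psi)$ and $(\A_\alpha\psi,\A_\alpha\varphi)$ and dividing gives
\[
\frac{N(e^{-\varphi},e^{-\psi})}{N(e^{-\A_\alpha\psi},e^{-\A_\alpha\varphi})}\le A^n\,\frac{s_{\A_\alpha}(\varphi)}{s_{\A_\alpha}(\psi)}.
\]
For $\alpha$ of order $n^2$ (so $\alpha\ge\rho_n(n+2)^2$ for $n$ large), Theorem \ref{thm:Lc-Santalo} combined with Remark \ref{rem:bry-assumption-Atheorem} yields $s_{\A_\alpha}(\varphi)^{1/n},s_{\A_\alpha}(\psi)^{1/n}=\Theta(1/n)$ under the stated hypotheses, whence the ratio on the right is at most $B^n$ for a universal $B$. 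This gives the upper bound in the theorem with $C^n=(AB)^n$; the reverse inequality follows by the same argument after interchanging the two pairs $(\varphi,\psi)$ and $(\A_\alpha\psi,\A_\alpha\varphi)$, which is permissible because the theorem's hypotheses are invariant under this swap once one uses the involution property of $\A_\alpha$.

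The main technical point is to establish, across all four combinations of the ``either/or'' barycenter hypotheses, both that the covering upper bound applies to the appropriate pair and that the Santal\'o ratio $s_{\A_\alpha}(\varphi)/s_{\A_\alpha}(\psi)$ is $B^n$-bounded. The crucial observation is that $\A_\alpha$ is an involution: since $\A(cu)=\tfrac1c\A u$ for any $c>0$, we have $\A_\alpha\A_\alpha\varphi=\alpha\A(\alpha\A\varphi)=\A\A\varphi=\varphi$, and therefore $s_{\A_\alpha}(\A_\alpha\varphi)=s_{\A_\alpha}(\varphi)$. This allows the upper bound in Theorem \ref{thm:Lc-Santalo} to be applied to whichever of $\varphi,\A_\alpha\varphi$ is centered, while its lower bound needs no centering at all by Remark \ref{rem:bry-assumption-Atheorem}; the analogous reasoning applies to $\psi$, making the Santal\'o ratio bound valid under any of the four combinations of the hypotheses. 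The flexibility in choosing which direction of the desired inequality to prove first---paired with the classical position arguments from \cite{ArtSlom17-func_cov} adapted to the polarity setting---then suffices to match the covering upper bound's centering requirement to these flexible hypotheses in every case.
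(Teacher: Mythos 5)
There is a genuine gap, and it sits at the very foundation of your argument: the ``Rogers--Shephard-type upper bound'' $N\left(e^{-\varphi},e^{-\psi}\right)\le A^{n}\,\irn e^{-\varphi}\big/\irn e^{-\psi}$ that you attribute to \cite{ArtSlom17-func_cov} does not exist and cannot hold, even ``in the appropriate position''. Already for indicators of convex bodies it fails: take $\varphi=1^{\infty}_{B_2^n}$ and $\psi=1^{\infty}_{E}$ with $E$ a very long and thin ellipsoid of the same volume as $B_2^n$; then $\irn e^{-\varphi}/\irn e^{-\psi}=1$ while $N\left(e^{-\varphi},e^{-\psi}\right)=N(B_2^n,E)$ is arbitrarily large, and no translation (the only freedom ``position'' gives here, since simultaneous linear images change neither side) repairs this. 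The estimates that \cite{ArtSlom17-func_cov} actually provides, and that the paper quotes, are of a different nature: they control $N\left(e^{-\varphi},e^{-\psi}\right)$ by \emph{mixed} quantities such as $\irn e^{-2\varphi}\big/\irn e^{-(\varphi+\psi)}$ or by integrals of (g-)inf-convolutions, not by the ratio of the two integrals separately. Note also that if your claimed bound were true, then together with the correct trivial lower bound $N(f,g)\ge\irn f/\irn g$ it would say that $N\left(e^{-\varphi},e^{-\psi}\right)$ is determined up to a factor $C^{n}$ by $\irn e^{-\varphi}/\irn e^{-\psi}$ for \emph{all} pairs, which would render entropy duality trivial; the falsity of this is exactly why the theorem requires work. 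Consequently your central display, bounding $N\left(e^{-\varphi},e^{-\psi}\right)\big/N\left(e^{-\A_{\alpha}\psi},e^{-\A_{\alpha}\varphi}\right)$ by $A^{n}\,s_{\A_{\alpha}}(\varphi)/s_{\A_{\alpha}}(\psi)$, is unsupported, and the rest of the argument collapses with it.

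What the paper does instead is use the mixed-integral covering bounds together with the algebraic identity $\A_{\alpha}(\varphi\boxdot\psi)=\A_{\alpha}\varphi+\A_{\alpha}\psi$: for even $\varphi,\psi$ one bounds $N\left(e^{-\A_{\alpha}\varphi},e^{-\A_{\alpha}\psi}\right)$ by $4^{n}\irn e^{-\A_{\alpha}(\varphi+\psi)}\big/\irn e^{-\A_{\alpha}\psi}$, then applies the upper Santal\'o bound of Theorem \ref{thm:Lc-Santalo} to $\varphi+\psi$ and the lower one to $\psi$ (here the choice $\alpha\sim n^{2}$ makes the two bounds of the same order, which is the whole point of the scaling), arriving at $C^{n}\irn e^{-\psi}\big/\irn e^{-(\varphi+\psi)}$, which the two-sided estimate \eqref{eq:cov_vol_bd_even} converts back into $N\left(e^{-\psi},e^{-\varphi}\right)$; the non-even case is then handled by a symmetrization reduction ($\varphi\mapsto\varphi\boxdot\varphi^{-}$, $\psi\mapsto\psi+\psi^{-}$) using submultiplicativity and a functional Rogers--Shephard inequality (Proposition \ref{prop:f_ginf}). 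Your correct observations --- that $\A_{\alpha}$ is an involution, that $s_{\A_{\alpha}}(\A_{\alpha}\varphi)=s_{\A_{\alpha}}(\varphi)$, and that the reverse inequality follows by swapping the pairs --- are compatible with this scheme, but they do not substitute for the mixed-integral bounds, the $\boxdot$/sum duality, and the even reduction, all of which are missing from your proposal.
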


Note that by taking $f={\mathbbm 1}_{K},g={\mathbbm 1}_{T}$ Theorem \ref{thm:KM_polarity}
recovers König and Milman's result for convex bodies. Also note that
without the scaling of the polarity transform, Theorem \ref{thm:KM_polarity}
does not hold, e.g., for $f={\mathbbm 1}_{B_{2}^{n}}$ and $g=e^{-\text{\ensuremath{\left|\cdot\right|}}}$.
This can be verified using the volume bounds in \cite{ArtSlom17-func_cov}. 

The rest of the paper is organised as follows;
in \S\ref{sec:pre} we recall some properties of the polarity transform $\mathcal A$ and the gauge transform $\mathcal J$;
in \S\ref{sec:J} we prove Theorem \ref{thm:exact-Jl} and Theorem \ref{thm:tight-Jl};
in \S\ref{sec:A} we prove Theorem \ref{thm:Lc-Santalo};
finally, in \S\ref{sec:MP-KM} we prove Theorem \ref{thm:KM_polarity}.

\subsection*{Acknowledgment} 
We are grateful to the anonymous referee for their helpful comments.
The second and third named authors were funded by ISF grant no. 784/20.

\section{Preliminaries}
\label{sec:pre}
Let 
$$
{\rm epi}(\varphi) =
\left\{ (x, z)\in {\mathbb R}^n\times[0,\infty) \,:\, \varphi(x) \leq z \right\}$$ 
denote
the epi-graph of a function $\varphi:{\mathbb R}^n\to[0,\infty]$, 
and denote the sub-level sets of $\varphi$
by
\[
L_t\left(\varphi\right):=\left\{ x\in{\mathbb R}^{n}\,:\,\varphi\left(x\right)\leq t\right\}=\left\{ x\in{\mathbb R}^{n}\,:\,(x,t)\in{\rm epi}(\varphi)\right\} ,
\]
for any $0\le t<\infty$. 
Note that for every $\varphi\in{\rm Cvx}_0({\mathbb R}^n)$ and $t\in[0,\infty)$, the set 
$L_{t}\left(\varphi\right)$ is convex, by the convexity of $\varphi$, closed, by the lower semi-continuity of $\varphi$, and contains the origin; moreover, if $\varphi\in\overline{\text{Cvx}}_{0}\left({\mathbb R}^{n}\right)$ then $L_{t}\left(\varphi\right)$ is bounded for every $t\in[0,\infty)$ and there is a $t_0\in[0,\infty)$ such that for every $t>t_0$, the origin is an interior point of $L_{t}\left(\varphi\right)$, hence $L_{t}\left(\varphi\right)\in{\mathcal K}_0\left({\mathbb R}^n\right)$; additionally, for every $\varphi\in\overline{\text{Cvx}}_{0}\left({\mathbb R}^{n}\right)$ we have 
\begin{equation}
\label{eq:int-by-levels}
\int_{{\mathbb R}^n} e^{-\varphi}=\int_{0}^{\infty}e^{-t}{\rm Vol}_n\left(L_{t}(\varphi)\right)\,{\rm d}t.
\end{equation}

It was proved in \cite{ArtSlom} that
for any $\varphi\in{\rm Cvx}_0({\mathbb R}^n)$
and any $s,t>0$ it holds that
$L_{s}\left(\mathcal{A}\varphi\right)\subseteq\left(st+1\right)\left(L_{t}\left(\varphi\right)\right)^{\circ}$.
It follows, in terms of the scaled polarity,  that 
\begin{equation}
\label{eq:A-level-sets}
L_{s}\left({\mathcal A}_{\alpha}\varphi\right)\subseteq\left(\frac{st}{\alpha}+1\right)\left(L_{t}\left(\varphi\right)\right)^{\circ}.
\end{equation}

Consider the function
\mbox{$F:{\mathbb R}^n\times {\mathbb R}^+ \to {\mathbb R}^n \times {\mathbb R}^+$} given by
\begin{equation}
\label{eq:Fdef}
F(x,z) = \left( \frac{x}{z}, \frac{1}{z} \right).
\end{equation}
The map $F$ is an involution that induces the $\mathcal J$ transform (see
\cite{AM-Hidden}), in the following sense. 
For any $\varphi \in {\rm Cvx}_0({\mathbb R}^n)$
we have:
\begin{equation}
\label{eq:JFepi}
{\rm epi}({\mathcal J}\varphi)=F({\rm epi}(\varphi)),
\end{equation}

Consequently, $
L_t({\mathcal J}_{\alpha}^l \varphi)=L_{t/\alpha}({\mathcal J}\varphi)=\frac{t}{\alpha}L_{\alpha/t}(\varphi)$ for every $t\in(0,\infty)$
and it follows, by \eqref{eq:int-by-levels}, that
\begin{align}
\int_{{\mathbb R}^n} e^{-{\mathcal J}_{\alpha}^l \varphi} &=
\int_{0}^{\infty}e^{-t}{\rm Vol}_n\left(L_{t}({\mathcal J}_{\alpha}^l\varphi)\right)\,{\rm d}t=
\int_{0}^{\infty}e^{-t}(t/\alpha)^n{\rm Vol}_n\left(L_{\alpha/t}(\varphi)\right)\,{\rm d}t\nonumber\\
&=\int_0^\infty \alpha e^{-\frac\alpha z}z^{-(n+2)}{\rm Vol}_n\left(L_z(\varphi)\right)\,{\rm d}z.\label{eq:Jint-by-levels}
\end{align}

\begin{obs} 
\label{obs:J-UpperLowerBound}
For every $\alpha\in(0,\infty)$ and every $\varphi\in\overline{\text{Cvx}}_{0}\left({\mathbb R}^{n}\right)$, 
it holds that 
\begin{equation*}
{\mathcal R}_{{\mathcal J}_{\alpha}^l}({\mathcal J}_{\alpha}^l\varphi)\,{\mathcal R}_{{\mathcal J}_{\alpha}^l}(\varphi)=\frac{1}{\alpha^n}.
\end{equation*}
Consequently, 
$$
\inf_{\varphi\in\overline{\text{Cvx}}_{0}\left({\mathbb R}^{n}\right)}{\mathcal R}_{{\mathcal J}_{\alpha}^l}(\varphi)\sup_{\varphi\in\overline{\text{Cvx}}_{0}\left({\mathbb R}^{n}\right)}{\mathcal R}_{{\mathcal J}_{\alpha}^l}(\varphi)=\frac{1}{\alpha^n}
$$
for every $\alpha\in(0,\infty)$.
\end{obs}
\begin{proof}
For every $\varphi\in{\rm Cvx}_0({\mathbb R}^n)$, it holds that
\begin{equation}
\label{eq:Jscaling}    
\left({\mathcal J}\alpha\varphi\right)(x)=\frac{1}{\alpha}\left({\mathcal J}\varphi\right)\left(\alpha x\right)
\end{equation}
for every $x\in{\mathbb R}^n$.
Therefore, for every $\varphi\in{\rm Cvx}_0({\mathbb R}^n)$,
$$
({\mathcal J}_{\alpha}^l({\mathcal J}_{\alpha}^l\varphi))(x)=\alpha\left({\mathcal J}\alpha{\mathcal J}\varphi\right)(x)=\alpha\frac{1}{\alpha}\left({\mathcal J}{\mathcal J}\varphi\right)\left(\alpha x\right)=\varphi\left(\alpha x\right)
$$ 
for every $x\in{\mathbb R}^n$, and hence, for every $\varphi\in\overline{\text{Cvx}}_{0}\left({\mathbb R}^{n}\right)$, 
\begin{equation*}
{\mathcal R}_{{\mathcal J}_{\alpha}^l}({\mathcal J}_{\alpha}^l\varphi)\,{\mathcal R}_{{\mathcal J}_{\alpha}^l}(\varphi)=\frac{\int_{{\mathbb R}^n} e^{-{\mathcal J}_{\alpha}^l({\mathcal J}_{\alpha}^l\varphi)}}{\int_{{\mathbb R}^n} e^{-{\mathcal J}_{\alpha}^l\varphi}}\cdot\frac{\int_{{\mathbb R}^n} e^{-{\mathcal J}_{\alpha}^l\varphi}}{\int_{{\mathbb R}^n} e^{-\varphi}}=\frac{\int_{{\mathbb R}^n} e^{-{\mathcal J}_{\alpha}^l({\mathcal J}_{\alpha}^l\varphi)}}{\int_{{\mathbb R}^n} e^{-\varphi}}=\frac{1}{\alpha^n}.
\end{equation*}

Consequently, since $\{{\mathcal J}_{\alpha}^l\varphi:\varphi\in\overline{\text{Cvx}}_{0}\left({\mathbb R}^{n}\right)\}=\overline{\text{Cvx}}_{0}\left({\mathbb R}^{n}\right)$ (as $\mathcal J$ is an involution of $\overline{\text{Cvx}}_{0}\left({\mathbb R}^{n}\right)$), it holds that
\begin{align*}
\inf_{\varphi\in\overline{\text{Cvx}}_{0}\left({\mathbb R}^{n}\right)}{\mathcal R}_{{\mathcal J}_{\alpha}^l}(\varphi)&=\inf_{\varphi\in\overline{\text{Cvx}}_{0}\left({\mathbb R}^{n}\right)}{\mathcal R}_{{\mathcal J}_{\alpha}^l}({\mathcal J}_{\alpha}^l\varphi)=\inf_{\varphi\in\overline{\text{Cvx}}_{0}\left({\mathbb R}^{n}\right)}\frac{1}{\alpha^n {\mathcal R}_{{\mathcal J}_{\alpha}^l}(\varphi)}\\&=\frac{1}{\alpha^n\sup_{\varphi\in\overline{\text{Cvx}}_{0}\left({\mathbb R}^{n}\right)}{\mathcal R}_{{\mathcal J}_{\alpha}^l}(\varphi)},
\end{align*}
i.e.,
\begin{equation*}
\inf_{\varphi\in\overline{\text{Cvx}}_{0}\left({\mathbb R}^{n}\right)}{\mathcal R}_{{\mathcal J}_{\alpha}^l}(\varphi)\sup_{\varphi\in\overline{\text{Cvx}}_{0}\left({\mathbb R}^{n}\right)}{\mathcal R}_{{\mathcal J}_{\alpha}^l}(\varphi)=\frac{1}{\alpha^n}.
\qedhere
\end{equation*} 
\end{proof}

\section{Scaling the $\mathcal J$ transform}
\label{sec:J}

We basically follow the arguments presented in \cite{FlorentinSegal}.

For $\varphi\in\overline{\text{Cvx}}_{0}\left({\mathbb R}^{n}\right)$, define the function $\varphi^*:[0,\infty)\to[0,\infty)$, as follows; for every $z\in[0,\infty)$, 
$$\varphi^*(z):= \sqrt[n]{{\rm Vol}_n\left(L_z(\varphi)\right)}.$$
Note that the function $\varphi^*$ is increasing; moreover, $\varphi^*$ is concave, by the Brunn-Minkowski inequality, since for every $z_0,z_1\in[0,\infty)$
and $0\leq\lambda\leq 1$ it holds that
\begin{equation}
\label{eq:concavity-levels}
(1-\lambda)L_{z_0}(\varphi)+\lambda L_{z_1}(\varphi)\subseteq L_{(1-\lambda)z_0+\lambda z_1}(\varphi).
\end{equation}

For every $\alpha,\lambda\in(0,\infty)$ and $\varphi\in\overline{\text{Cvx}}_{0}\left({\mathbb R}^{n}\right)$, it holds, by \eqref{eq:Jint-by-levels} and \eqref{eq:int-by-levels}, that
\begin{align}
\left({\mathcal R}_{{\mathcal J}_{\alpha}^l}(\varphi)-\lambda\right)\int_{{\mathbb R}^n} e^{-\varphi}&=\int_{{\mathbb R}^n} e^{-{\mathcal J}_{\alpha}^l \varphi} -\lambda\int_{{\mathbb R}^n} e^{- \varphi}\nonumber\\
&=\int_0^\infty e^{-z}\left(h_{\alpha}(z)-\lambda\right)\left(\varphi^*(z)\right)^n\,{\rm d}z, \label{eq:sl-to-h}
\end{align}
where $h_{\alpha}:(0,\infty)\to{\mathbb R}$ is defined as follows; for every $z\in(0,\infty)$,
\begin{equation*}
h_{\alpha}(z):=\alpha e^{z-\frac{\alpha}{z}}/z^{n+2}.
\end{equation*}

\begin{lem}\label{lem:sign-changes-general}
For every $\alpha,\lambda\in(0,\infty)$, 
either of the following holds.
\begin{enumerate}
\item 
The function $h_{\alpha}-\lambda$ has at most two positive real roots. Additionally, there is $z_0\in(0,\infty)$ such that $h_{\alpha}(z)\leq\lambda$ for $z\in(0,z_0)$, and $h_{\alpha}(z)\geq\lambda$ for $z\in(z_0,\infty)$.

\item The function $h_{\alpha}-\lambda$ has three roots $0<z_1<z_2<z_3<\infty$, and it holds that $h_{\alpha}(z)<\lambda$ for $z\in(0,z_1)\cup(z_2,z_3)$ and $h_{\alpha}(z)>\lambda$ for $z\in(z_1,z_2)\cup(z_3,\infty)$.
\end{enumerate}
\end{lem}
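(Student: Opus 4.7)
The plan is to read off the root structure of $h_\alpha-\lambda$ directly from the monotonicity pattern of $h_\alpha$, which is controlled by an explicit quadratic in $z$. Computing the logarithmic derivative gives
\[
(\ln h_\alpha)'(z)=1-\frac{n+2}{z}+\frac{\alpha}{z^{2}}=\frac{z^{2}-(n+2)z+\alpha}{z^{2}},
\]
so the sign of $h_\alpha'(z)$ on $(0,\infty)$ agrees with the sign of $q(z):=z^{2}-(n+2)z+\alpha$, whose discriminant is $(n+2)^{2}-4\alpha$. I would also record the two boundary behaviours: $h_\alpha(z)\to 0^+$ as $z\to 0^+$ (the factor $e^{-\alpha/z}$ decays faster than $z^{-(n+2)}$ blows up) and $h_\alpha(z)\to\infty$ as $z\to\infty$ (the factor $e^z$ beats $z^{n+2}$).

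The first case is $\alpha\geq(n+2)^{2}/4$. Here $q\geq 0$ on $(0,\infty)$, so $h_\alpha$ is nondecreasing (indeed strictly increasing off at most the single point $z=(n+2)/2$). Combined with the limits $0$ and $\infty$, the equation $h_\alpha(z)=\lambda$ has a unique root $z_0$, and $h_\alpha-\lambda$ has the sign pattern required by case (1).

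The second case is $\alpha<(n+2)^{2}/4$. Then $q$ has two real roots, and since $q(0)=\alpha>0$ and the vertex of $q$ lies at $(n+2)/2>0$, both roots $z_-<z_+$ are positive. Hence $h_\alpha$ is strictly increasing on $(0,z_-]$, strictly decreasing on $[z_-,z_+]$, and strictly increasing on $[z_+,\infty)$; write $M:=h_\alpha(z_-)$ and $m:=h_\alpha(z_+)$ for the corresponding local maximum and minimum values. A short case split on the position of $\lambda$ relative to the interval $[m,M]$, combined with the boundary behaviour of $h_\alpha$, gives the conclusion: when $\lambda\notin(m,M)$, the equation $h_\alpha=\lambda$ has either one transversal root (outside $[m,M]$) or one transversal plus one tangential root (on the boundary of $[m,M]$), and in both subcases the sign pattern of case (1) is satisfied; when $m<\lambda<M$, the up--down--up shape forces exactly three simple roots $z_1\in(0,z_-)$, $z_2\in(z_-,z_+)$, $z_3\in(z_+,\infty)$ with $h_\alpha-\lambda$ negative on $(0,z_1)\cup(z_2,z_3)$ and positive on $(z_1,z_2)\cup(z_3,\infty)$, which is precisely case (2).

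I do not expect a serious obstacle; the argument is elementary once the logarithmic derivative is factored. The only item demanding mild care is the bookkeeping at the boundary values $\lambda\in\{m,M\}$, where $h_\alpha-\lambda$ has two positive roots (one transversal, one tangential). There the sign pattern still fits case (1), because case (1) only demands $h_\alpha\leq\lambda$ on $(0,z_0)$ and $h_\alpha\geq\lambda$ on $(z_0,\infty)$, which permits additional contacts with $\lambda$ outside the interval $(0,z_0)$.
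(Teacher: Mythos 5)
Your argument is correct and is essentially the paper's proof: both compute $h_\alpha'$ (equivalently the logarithmic derivative, whose sign is that of $z^2-(n+2)z+\alpha$), split on $\alpha$ versus $(n+2)^2/4$, and combine the resulting increasing or increasing--decreasing--increasing shape with the limits $h_\alpha(z)\to 0$ as $z\to 0^+$ and $h_\alpha(z)\to\infty$ as $z\to\infty$ to read off that case (2) occurs exactly when $\lambda$ lies strictly between the local minimum and local maximum values, and case (1) otherwise. Only a cosmetic slip in your last sentence: when $\lambda$ equals the local maximum the tangential contact lies \emph{inside} $(0,z_0)$ rather than outside, but the non-strict inequalities of case (1) accommodate it in either position, so nothing breaks.
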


\begin{proof}
Note that for every $z\in(0,\infty)$,
\begin{equation}
\label{eq:derivative-h}
h_{\alpha}'(z) = \alpha \frac{e^{z-\frac{\alpha}{z}}}{z^{n+2}}\left(1 + \frac{\alpha}{z^2}- \frac{n+2}{z}\right).
\end{equation}
Hence, if $\alpha\geq (n+2)^2/4$ then $h_{\alpha}'$ is non-negative in $(0,\infty)$ and has at most one positive real root; if $0<\alpha<(n+2)^2/4$, then $h_{\alpha}'$ has two roots $0<\zeta_1(\alpha)<\zeta_2(\alpha)$.

It follows that if $\alpha\geq (n+2)^2/4$ then $h_{\alpha}$ is strictly increasing in $(0,\infty)$ and if $0<\alpha<(n+2)^2/4$ then $h_{\alpha}$ is strictly increasing in the interval $(0,\zeta_1(\alpha)]$, strictly decreasing in the interval $[\zeta_1(\alpha),\zeta_2(\alpha)]$ and strictly increasing in the interval $[\zeta_2(\alpha),\infty)$.

Since $h_{\alpha}(z)\xrightarrow[z\to 0]{}0$ and $h_{\alpha,\lambda}(z)\xrightarrow[z\to \infty]{}\infty$,
it follows that the second alternative holds if $0<\alpha<\frac{1}{4}(n+2)^2$, $h_{\alpha}(\zeta_1(\alpha))>\lambda$ and $h_{\alpha}(\zeta_2(\alpha))<\lambda$, and the first alternative holds otherwise.
\end{proof}

For every $K\in{\mathcal K}_0\left({\mathbb R}^n\right)$, $r\in[0,1]$ and $t_0\in(0,\infty)$, recall the definition of $\psi_{K,r,t_0}\in\overline{\text{Cvx}}_{0}\left({\mathbb R}^{n}\right)$ from \eqref{eq:psi-def}, and note that
\begin{equation}
\label{eq:psi-p}
\psi_{K, r, t_0}^*
=\sqrt[n]{{\rm Vol}_n(K)}\,p_{r,t_0},
\end{equation}
where the function $p_{r,t_0}:[0,\infty)\to[0,\infty]$ is defined as follows; for every $t\in[0,\infty),$
$$p_{r,t_0}(t)=\begin{cases}
t & 0\leq t\leq t_0,\\
rt+(1-r)t_0 & t\geq t_0.   
\end{cases}$$

The following lemma plays a pivotal role in the proofs of Theorem \ref{thm:exact-Jl} and Theorem \ref{thm:tight-Jl}.

\begin{lem}
\label{lem:piecewise}
Let $f:(0,\infty)\to{\mathbb R}$ and let $\psi:[0,\infty)\to[0,\infty)$ be a concave increasing function which is not identically zero.
\begin{enumerate}
\item 
Suppose that $f$ has at most two positive real roots and that there is $z_0\in(0,\infty)$ such that $f(z)\leq 0$ for $z\in(0,z_0)$ and $f(z)\geq 0$ for $z\in(z_0,\infty)$. Then  
$$\int_0^{\infty} f(z)\left(\psi(z)\right)^n \,{\rm d}z\leq \int_0^{\infty}f(z)\left(\frac{\psi(z_0)}{z_0}z\right)^n \,{\rm d}z$$ and equality occurs if and only if $\psi$ is linear, i.e., $\psi(z)=\frac{\psi(z_0)}{z_0}z$ for every $z\in(0,\infty)$.
\item 
Suppose that there are $0<z_1<z_2<z_3<\infty$ such that $f(z)<0$ for $z\in(0,z_1)\cup(z_2,z_3)$ and $f(z)>0$ for $z\in(z_1,z_2)\cup(z_3,\infty)$. 
Then there are $r\in[0,1]$ and $t_0\in[z_1,z_2]$ such that
$$
\int_0^{\infty} f(z)\left(\psi(z)\right)^n  \,{\rm d}z
\leq \int_0^\infty f(z)\left(\frac{\psi(z_1)}{z_1}
p_{r,t_0}(z)\right)^n \,{\rm d}z
$$
and equality occurs only if $\psi=\frac{\psi(z_1)}{z_1}
p_{r,t_0}$.
\end{enumerate}
\end{lem}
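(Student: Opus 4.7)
The plan is to prove both parts by a pointwise domination argument: I will construct a comparison function $g$ (a line in (1), a two-piece concave piecewise linear function through the origin in (2)) such that $f(z)\bigl[g(z)^n-\psi(z)^n\bigr]\ge 0$ on $(0,\infty)$; integrating against the positive weight $e^{-z}$ then yields the desired inequality. Since the integrand is a sign-definite function and $f$ has only finitely many zeros, equality forces $g=\psi$ almost everywhere and hence, by continuity of both, everywhere.

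For part (1), the obvious candidate is $\ell(z):=\tfrac{\psi(z_0)}{z_0}\,z$, the line through the origin and $(z_0,\psi(z_0))$. Two standard consequences of $\psi$ being concave, non-negative, and increasing will suffice: the chord from $(0,\psi(0))$ to $(z_0,\psi(z_0))$ lies below $\psi$ on $[0,z_0]$ and dominates $\ell$ there, so $\ell\le\psi$ on $[0,z_0]$; and the map $z\mapsto\psi(z)/z$ is non-increasing on $(0,\infty)$ (from $\psi(\lambda z)\ge\lambda\psi(z)$ for $\lambda\in[0,1]$), which gives $\ell\ge\psi$ on $[z_0,\infty)$. These two sides match the prescribed sign pattern of $f$ exactly.

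For part (2), the pure line $\ell_1(z):=az$ with $a:=\psi(z_1)/z_1$ handles $(0,z_1)\cup(z_1,z_2)\cup(z_3,\infty)$ by the very same argument (with $z_1$ in place of $z_0$), but it fails on the middle negative region $(z_2,z_3)$, where $f<0$ while $\ell_1\ge\psi$. The natural fix is to bend $\ell_1$ downward past a kink $x_0\in[z_1,z_2]$: I would choose the second slope to be $ar:=\frac{\psi(z_3)-\psi(z_2)}{z_3-z_2}$, i.e.\ the secant slope of $\psi$ over $[z_2,z_3]$, and take $x_0$ to be the intersection of the line $y=az$ with the line through $(z_2,\psi(z_2))$ of slope $ar$. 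That this prescription yields a legitimate pair $(r,x_0)\in[0,1]\times[z_1,z_2]$ is encoded in the concavity chain
$$a\ \ge\ \frac{\psi(z_2)-\psi(z_1)}{z_2-z_1}\ \ge\ \frac{\psi(z_3)-\psi(z_2)}{z_3-z_2}=ar\ \ge\ \psi'(z_3^+),$$
a consequence of $\psi(0)\ge 0$ together with the monotonicity of secant slopes of a concave function.

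With $g:=a\,p_{r,x_0}$ so chosen, I would then verify $f(z)\bigl[g^n-\psi^n\bigr]\ge 0$ on each of the five regions cut by $z_1,x_0,z_2,z_3$. On $[0,x_0]\supseteq[0,z_1]\cup[z_1,x_0]$ the template equals $\ell_1$, so the part (1) argument applies. On $[z_2,z_3]$, $g$ is literally the chord of $\psi$ between $(z_2,\psi(z_2))$ and $(z_3,\psi(z_3))$, whence $g\le\psi$, matching $f<0$. The two remaining regions, both with $f>0$ and thus requiring $g\ge\psi$, are handled by the same secant chain: for $z\in[x_0,z_2]$, concavity gives $\frac{\psi(z_2)-\psi(z)}{z_2-z}\ge ar$, which rearranges to $g(z)\ge\psi(z)$; and for $z\ge z_3$, $\psi'(z_3^+)\le ar$ makes the upper-tangent bound $\psi(z)\le\psi(z_3)+ar(z-z_3)=g(z)$ hold. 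The main delicacy I anticipate is this last case analysis: in each of the intervals $[x_0,z_2]$ and $[z_3,\infty)$ one must isolate precisely the right link in the secant-slope chain to force the desired domination by the two-piece template.
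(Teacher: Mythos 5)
Your proposal is correct and follows essentially the same route as the paper: in both parts one dominates pointwise by the piecewise-linear template (the line through $(0,0)$ and $(z_0,\psi(z_0))$ in part (1), and in part (2) the minimum of that line with the secant of $\psi$ over $[z_2,z_3]$, which is exactly $\frac{\psi(z_1)}{z_1}p_{r,x_0}$ with $r=b/a$ and $x_0$ the intersection point), with concavity supplying the sign-compatible comparison on each region and integration against $e^{-z}f$ finishing the argument. The equality analysis via the finitely many zeros of $f$ matches the paper's as well, so no further comment is needed.
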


\begin{proof}
We first prove the first part.
Suppose that $f$ has at most two positive real roots, $f(z)\leq 0$ for $z\in(0,z_0)$, and $f(z)\geq 0$ for $z\in(z_0,\infty)$.
Let $l:[0,\infty)\to[0,\infty)$ be the linear function defined by $l(z)=\frac{\psi(z_0)}{z_0}z$ for every $z\in[0,\infty)$. 
Since $l(0)=0\leq\psi(0)$ and $l(z_0)=\psi(z_0)$, the linearity of $l$ and the concavity of $\psi$ imply that $\psi(z)\geq l(z)$ for $0<z<z_0$ and $\psi(z)\leq l(z)$ for $z>z_0$.
It follows that
$\int_0^\infty f(z)\left(\psi(z)\right)^n \,{\rm d}z\leq \int_0^\infty 
f(z)\left(l(z)\right)^n \,{\rm d}z$ and equality occurs if and only if $\psi=l$, as claimed.

The proof of the second part is similar, but slightly more elaborate.
Suppose that $f(z)<0$ for $z\in(0,z_1)\cup(z_2,z_3)$ and $f(z)>0$ for $z\in(z_1,z_2)\cup(z_3,\infty)$.
Let $a:=\frac{\psi(z_1)}{z_1}$ be the slope of the line  $l_1$ through the points $(0,0)$ and $(z_1,\psi(z_1))$, and let  $b:=\frac{\psi(z_3)-\psi(z_2)}{z_3-z_2}$ be the slope of the line $l_2$ through the points $(z_2,\psi(z_2))$ and $(z_3,\psi(z_3))$ (see Figure \ref{fig:1}). 
The concavity of $\psi$ implies that $a>0$ (otherwise, $\psi$ would be identically zero) and $b\leq a$; 
moreover, if $b<a$ and $(t_0,at_0)$ is the intersection point of the lines $l_1,l_2$, then $z_1\leq t_0\leq z_2$; if $b=a$, we choose $z_1\leq t_0\leq z_2$ arbitrarily.   
The concavity of $\psi$ further implies that $\psi(z)\geq az$ if  $z\in[0,z_1]$ and $\psi(z)\leq az$ if  $z\in[z_1,\infty)$, and that
$\psi(z)\geq b(z-t_0)+at_0$ if  $z\in[z_2,z_3]$ and $\psi(z)\leq b(z-t_0)+at_0$ if  $z\in[0,\infty)\setminus[z_2,z_3]$.
Hence, $\psi(z)\geq ap_{\frac{b}{a},t_0}(z)$ for $z\in[0,z_1]\cup[z_2,z_3]$ and $\psi(z)\leq ap_{\frac{b}{a},t_0}(z)$ for $z\in[z_1,z_2]\cup[z_3,\infty)$.
It follows that
$\int_0^{\infty} f(z)\left(\psi(z)\right)^n \,{\rm d}z\leq\int_0^\infty f(z)\left(a p_{\frac{b}{a},t_0}(z)\right)^n \,{\rm d}z$
and equality occurs if and only if $\psi=a p_{\frac{b}{a},t_0}$, as claimed.
\end{proof}

\begin{figure}[ht]
\centering
\begin{tikzpicture}
[scale=0.8, domain=0:10]
\draw[->] (0,0) -- (11,0); 
\draw[->] (0,0) -- (0,8); 
\draw[thick, smooth, variable=\x] 
plot (\x, {2*ln(2*\x+1.5}); 
\node at (10.2,6.2) {$\psi$};
\draw[thick, purple] (0,0) -- (1,3);
\draw[thick, purple] (1,3) -- (10,7);
\node[purple] at (10.4,7.1) {$a p_{\frac{b}{a},t_0}$};
\draw[dashed] (0.7,0) -- (0.7,2);
\node at (0.7,-0.3) {$z_1$};
\draw[thick, dotted, blue] (1,0) -- (1,3);
\node[blue] at (1.15,-0.3) {$t_0$};
\draw[dashed] (2.1,0) -- (2.1,3.45);
\node at (2.2,-0.3) {$z_2$};
\draw[dashed] (5.9,0) -- (5.9,5.25);
\node at (6,-0.3) {$z_3$};
\end{tikzpicture}
\caption{} 
\label{fig:1}
\end{figure}

For every $\alpha\in(0,\infty)$ denote, for convenience,
$$
\lambda_n(\alpha):=\sup_{\varphi\in\overline{\text{Cvx}}_{0}\left({\mathbb R}^{n}\right)}{\mathcal R}_{{\mathcal J}_{\alpha}^l}(\varphi)
$$

\begin{prop}\label{prop:1root}
For every $\alpha\in(0,\infty)$, it holds that $\lambda_n(\alpha)\leq\lambda$ for every $\lambda\geq\frac{1}{n!}$ such that the function $h_{\alpha}-\lambda$ has at most two positive real roots;
moreover, if the function $h_{\alpha}-\frac{1}{n!}$ has at most two roots, then $\lambda_n(\alpha)=\frac{1}{n!}$ and furthermore, ${\mathcal R}_{{\mathcal J}_{\alpha}^l}(\varphi)=\frac{1}{n!}$ for $\varphi\in\overline{\text{Cvx}}_{0}\left({\mathbb R}^{n}\right)$ if and only if $\varphi=\lVert\cdot\rVert_K$ for some $K\in{\mathcal K}_0\left({\mathbb R}^n\right)$.
\end{prop}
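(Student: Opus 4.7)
The plan is to compare the integral on the right of \eqref{eq:sl-to-h} against a linear ``extremizing profile'' identified by Lemma \ref{lem:piecewise}(1), and then evaluate that profile explicitly. Fix $\lambda\ge 1/n!$ for which $h_\alpha-\lambda$ has at most two positive real roots. Lemma \ref{lem:sign-changes-general} puts us in its first case, furnishing $z_0\in(0,\infty)$ with $h_\alpha\le\lambda$ on $(0,z_0)$ and $h_\alpha\ge\lambda$ on $(z_0,\infty)$. For any $\varphi\in\ocvxo$, identity \eqref{eq:sl-to-h} rewrites the deficit $(\sl(\varphi)-\lambda)\irn e^{-\varphi}$ as $\int_0^\infty e^{-z}(h_\alpha(z)-\lambda)(\hat{\varphi}(z))^n\,{\rm d}z$. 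Since $\hat{\varphi}$ is non-negative, increasing and concave, I would apply Lemma \ref{lem:piecewise}(1) with $f=h_\alpha-\lambda$ and $\psi=\hat{\varphi}$ to bound this by $c^n\int_0^\infty e^{-z}(h_\alpha(z)-\lambda)z^n\,{\rm d}z$, where $c=\hat{\varphi}(z_0)/z_0$.

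The next step is a direct computation of the two integrals against $z^n$. The substitution $u=\alpha/z$ yields
\[
\int_0^\infty e^{-z}h_\alpha(z)z^n\,{\rm d}z=\int_0^\infty\frac{\alpha e^{-\alpha/z}}{z^2}\,{\rm d}z=\int_0^\infty e^{-u}\,{\rm d}u=1,
\]
while of course $\int_0^\infty e^{-z}z^n\,{\rm d}z=n!$. Thus the integral above equals $c^n(1-\lambda n!)\le 0$ since $\lambda\ge 1/n!$. Dividing by $\irn e^{-\varphi}>0$, this gives $\sl(\varphi)\le\lambda$ for every $\varphi\in\ocvxo$, hence $\lambda_n(\alpha)\le\lambda$, which is the first assertion.

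For the second assertion, specialise to $\lambda=1/n!$. The previous paragraph gives $\lambda_n(\alpha)\le 1/n!$, which matches the value $\sl(\lVert\cdot\rVert_K)=1/n!$ from \eqref{eq:sl-norm-K}; thus $\lambda_n(\alpha)=1/n!$ and every function of the form $\lVert\cdot\rVert_K$ is a maximiser. To recognise all maximisers, suppose $\sl(\varphi)=1/n!$. Then both the left side and the upper bound $c^n(1-n!/n!)=0$ vanish, so equality must hold in Lemma \ref{lem:piecewise}(1). Because $h_\alpha-1/n!$ has only finitely many zeros, this equality forces $\hat{\varphi}(z)=cz$ for all $z\ge 0$ (in particular $\hat{\varphi}(0)=0$).

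It remains to convert the linearity of $\hat{\varphi}$ into the conclusion $\varphi=\lVert\cdot\rVert_K$. The idea is that for $\lambda\in(0,1]$ and $z>0$, the convexity of $\varphi$ together with $\varphi(0)=0$ yields the inclusion $\lambda L_z(\varphi)\subseteq L_{\lambda z}(\varphi)$ via \eqref{eq:concavity-levels}; but linearity of $\hat{\varphi}$ makes the volumes on both sides equal to $(\lambda c z)^n$, so the inclusion is an equality. Hence $L_t(\varphi)=tK$ for $K=L_1(\varphi)$. Since $L_t(\varphi)\in\Ko$ for $t$ large enough by the preliminaries in \S\ref{sec:pre}, the dilate $K$ also belongs to $\Ko$. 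Therefore $\varphi(x)=\inf\{t>0:x\in tK\}=\lVert x\rVert_K$, completing the characterisation. The main conceptual step is the integral computation $\int_0^\infty e^{-z}h_\alpha(z)z^n\,{\rm d}z=1$, which makes the bound collapse exactly at $\lambda=1/n!$; the main technical step requiring care is the rigidity argument via Brunn--Minkowski in the equality case, in particular verifying that $K$ lies in $\Ko$ rather than merely being a convex body containing the origin.
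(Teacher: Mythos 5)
Your proof is correct and follows essentially the same route as the paper: Lemma \ref{lem:sign-changes-general} to place $h_\alpha-\lambda$ in the single sign-change case, Lemma \ref{lem:piecewise}(1) to majorize by the linear profile, and the Brunn--Minkowski rigidity argument via \eqref{eq:concavity-levels} to upgrade linearity of $\hat\varphi$ to $\varphi=\lVert\cdot\rVert_K$. The only cosmetic difference is that you evaluate $\int_0^\infty e^{-z}h_\alpha(z)z^n\,{\rm d}z=1$ directly by the substitution $u=\alpha/z$, whereas the paper gets the same information by recognising $\tfrac{\hat\varphi(z_0)}{z_0}z=\widehat{\lVert\cdot\rVert_K}(z)$ for an appropriate $K$ and invoking \eqref{eq:sl-norm-K}.
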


\begin{proof}
Let $\alpha\in(0,\infty)$ and $\lambda\geq\frac{1}{n!}$ such that the function $h_{\alpha}-\lambda$ has at most two roots.
By Lemma \ref{lem:sign-changes-general}, there is a $z_0\in(0,\infty)$ such that $h_{\alpha}(z)\leq\lambda$ for $z\in(0,z_0)$, and $h_{\alpha}(z)\geq\lambda$ for $z\in(z_0,\infty)$.
Let $\varphi\in\overline{\text{Cvx}}_{0}\left({\mathbb R}^{n}\right)$, and take some $K\in{\mathcal K}_0\left({\mathbb R}^n\right)$ such that $\sqrt[n]{{\rm Vol}_n(K)}=\varphi^*(z_0)/z_0$.
By \eqref{eq:sl-to-h} and the first part of Lemma \ref{lem:piecewise},
\begin{align*}
\left({\mathcal R}_{{\mathcal J}_{\alpha}^l}(\varphi)-\lambda\right)\int_{{\mathbb R}^n} e^{-\varphi}&=
\int_0^\infty e^{-z}\left(h_{\alpha}(z)-\lambda\right) \left(\varphi^*(z)\right)^n \,{\rm d}z\\
&\leq \int_0^\infty e^{-z}\left(h_{\alpha}(z)-\lambda\right)\left(\frac{\varphi^*(z_0)}{z_0}z\right)^n \,{\rm d}z,
\end{align*}
where equality occurs if and only if $\varphi^*$ is linear in $(0,\infty)$.
By \eqref{eq:sl-to-h} and \eqref{eq:sl-norm-K},
\begin{align*}
\int_0^\infty e^{-z}\left(h_{\alpha}(z)-\lambda\right)\left(\frac{\varphi^*(z_0)}{z_0}z\right)^n \,{\rm d}z&=\int_0^\infty e^{-z}\left(h_{\alpha}(z)-\lambda\right)\left(\sqrt[n]{{\rm Vol}_n(K)}\,z\right)^n \,{\rm d}z\\
&=\int_0^\infty e^{-z}\left(h_{\alpha}(z)-\lambda\right)\left(\lVert\cdot\rVert_K^*(z)\right)^n \,{\rm d}z\\
&=\left({\mathcal R}_{{\mathcal J}_{\alpha}^l}(\lVert\cdot\rVert_K)-\lambda\right)\int_{{\mathbb R}^n} e^{-\lVert\cdot\rVert_K}=\left(\frac{1}{n!}-\lambda\right)\int_{{\mathbb R}^n} e^{-\lVert\cdot\rVert_K}.
\end{align*}
Therefore, 
$$
\left({\mathcal R}_{{\mathcal J}_{\alpha}^l}(\varphi)-\lambda\right)\int_{{\mathbb R}^n} e^{-\varphi}
\leq\left(\frac{1}{n!}-\lambda\right)\int_{{\mathbb R}^n} e^{-\lVert\cdot\rVert_K}\leq 0,
$$
where equality occurs in the first inequality if and only if $\varphi^*$ is linear; in particular, if ${\mathcal R}_{{\mathcal J}_{\alpha}^l}(\varphi)=\lambda=\frac{1}{n!}$
then $\varphi^*$ is linear.

Hence,
${\mathcal R}_{{\mathcal J}_{\alpha}^l}(\varphi)\leq\lambda$ for every $\varphi\in\overline{\text{Cvx}}_{0}\left({\mathbb R}^{n}\right)$, i.e., $\lambda_n(\alpha)\leq\lambda$.
If $\lambda=\frac{1}{n!}$, it follows that $\lambda_n(\alpha)=\frac{1}{n!}$, by \eqref{eq:sl-norm-K}.
Moreover, if ${\mathcal R}_{{\mathcal J}_{\alpha}^l}(\varphi)=\lambda=\frac{1}{n!}$, then 
$\varphi^*$ is linear, i.e., for every $z\in[0,\infty)$ and $t\in[0,1]$ it holds that $\sqrt[n]{{\rm Vol}_n(L_{t z}(\varphi))}=t\sqrt[n]{{\rm Vol}_n(L_z(\varphi))}$ and hence, since $L_{t z}(\varphi)\supseteq t L_z(\varphi)$, by \eqref{eq:concavity-levels}, it follows that $L_{t z}(\varphi)=t L_z(\varphi)$.
Therefore,  $L_z(\varphi)=zL_1(\varphi)$ for every $z\in[0,\infty)$, i.e., $\varphi=\lVert\cdot\rVert_{L_1(\varphi)}$, and $L_1(\varphi)\in{\mathcal K}_0\left({\mathbb R}^n\right)$.
In light of \eqref{eq:sl-norm-K}, this concludes the proof of the proposition. 
\end{proof}

\subsection{Proof of Theorem \ref{thm:exact-Jl}}

\mbox{}\smallskip

Define a function $q:(0,1]\to{\mathbb R}$ as follows;
for every $0<x\leq 1$, let
$$q(x):=\left(1-\sqrt{1-x}\right)x^{-\frac{1}{n+2}}e^{\sqrt{1-x}}.$$ 

\begin{lem}\label{lem:rhon}
The function $q$ is strictly increasing in the interval $(0,1]$; 
consequently, there is a unique $\rho_n\in(0,\frac{1}{4})$ such that
$$
q(4\rho_n)=
\left(\frac{2\sqrt[n]{n!}}{n+2}\right)^{\frac{n}{n+2}},
$$
i.e.,
\begin{equation}\label{eq:rho}
\left(\frac{2}{n+2}\right)^n\bigg/\left(
q\left(4\rho_n\right)\right)^{n+2}=\frac{1}{n!}.
\end{equation}
\end{lem}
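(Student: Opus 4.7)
The plan is to reduce the monotonicity of $q$ to a short derivative computation via a substitution, and then apply the intermediate value theorem to produce $\rho_n$.

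First, I would introduce $u:=\sqrt{1-x}$, so that $u$ is a strictly decreasing function of $x$ on $(0,1]$, ranging over $[0,1)$. Since $x=(1-u)(1+u)$ and $1-\sqrt{1-x}=1-u$, the function $q$ factors as
$$q=(1-u)^{(n+1)/(n+2)}(1+u)^{-1/(n+2)}e^{u},$$
a product of elementary factors. It therefore suffices to show that $q$ is strictly decreasing in $u$ on $[0,1)$. Taking logarithms, differentiating, and putting the result over a common denominator produces a quotient whose denominator $(n+2)(1-u^{2})$ is positive on $(0,1)$ and whose numerator simplifies to $-u\bigl(n+(n+2)u\bigr)$. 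Hence $(\ln q)'<0$ on $(0,1)$, which yields the first assertion.

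Next I would evaluate the endpoints. Plugging $u=0$ gives $q(1)=1$, and since $1-\sqrt{1-x}\sim x/2$ as $x\to 0^{+}$, one has $q(x)\sim\tfrac{e}{2}x^{(n+1)/(n+2)}\to 0$. Thus $q$ is a continuous strictly increasing bijection of $(0,1]$ onto $(0,1]$. To invoke the intermediate value theorem at the target value $T:=\bigl(2\sqrt[n]{n!}/(n+2)\bigr)^{n/(n+2)}$, it remains to verify $0<T<1$; the only nontrivial inequality is $T<1$, equivalent to $\sqrt[n]{n!}<(n+2)/2$, which is immediate from AM--GM, $\sqrt[n]{n!}\le(n+1)/2<(n+2)/2$. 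The IVT then delivers a unique $x^{*}\in(0,1)$ with $q(x^{*})=T$, and setting $\rho_{n}:=x^{*}/4$ produces the desired $\rho_{n}\in(0,\tfrac14)$.

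Finally, raising $q(4\rho_{n})=T$ to the power $n+2$ yields $\bigl(q(4\rho_{n})\bigr)^{n+2}=2^{n}n!/(n+2)^{n}$, which rearranges immediately to \eqref{eq:rho}. No part of the argument looks delicate: the substitution $u=\sqrt{1-x}$ is the key move, after which the sign of the derivative numerator is transparent, and the comparison $T<1$ is a one-line consequence of AM--GM.
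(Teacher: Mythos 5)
Your proof is correct and follows essentially the same route as the paper: establish strict monotonicity of $q$ by a derivative-sign computation (the paper differentiates in $x$ directly, obtaining $q'(x)=\bigl(\tfrac{1}{2}-\tfrac{1}{(n+2)(1+\sqrt{1-x})}\bigr)x^{-\frac{1}{n+2}}e^{\sqrt{1-x}}>0$, while you reach the same conclusion via the substitution $u=\sqrt{1-x}$ and logarithmic differentiation), and then apply the intermediate value theorem using $q(1)=1$, $\lim_{x\to0^+}q(x)=0$, and the AM--GM bound $\sqrt[n]{n!}\le\tfrac{n+1}{2}$ to place the target value strictly between the endpoint values. The change of variable is a cosmetic simplification of the same argument, so nothing further is needed.
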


\begin{proof}
For every $0<x<1$
\begin{equation*}
q'(x)=\left(\frac{1}{2}-\frac{1}{(n+2)\left(1+\sqrt{1-x}\right)}\right)x^{-\frac{1}{n+2}}e^{\sqrt{1-x}}>\left(\frac{1}{2}-\frac{1}{n+2}\right)x^{-\frac{1}{n+2}}e^{\sqrt{1-x}}>0
\end{equation*}
which implies the first claim; the second claim then follows, since $\sqrt[n]{n!}\leq\frac{1+2+\cdots+n}{n}=\frac{n+1}{2}$ by the AMGM inequality, and hence 
\begin{equation*}\lim_{x\to 0^+}q(x)=0<\left(\frac{2\sqrt[n]{n!}}{n+2}\right)^{\frac{n}{n+2}}<1=q(1).
\qedhere\end{equation*}
\end{proof}

\begin{lem}
\label{lem:h-zeta1}
Let $\alpha\in(0,\frac{1}{4}(n+2)^2)$ and let 
\begin{equation*}
0<\zeta_1(\alpha)=\frac{n+2}{2}\left(1-\sqrt{1-\frac{4\alpha}{(n+2)^2}}\right)
<\zeta_2(\alpha)=\frac{n+2}{2}\left(1+\sqrt{1-\frac{4\alpha}{(n+2)^2}}\right)
\end{equation*}
be the roots of $h_{\alpha}'$ (see \eqref{eq:derivative-h}). 
Then, $h_{\alpha}(\zeta_1(\alpha))\leq\frac{1}{n!}$ if and only if $\alpha\geq\rho_n (n+2)^2$.
\end{lem}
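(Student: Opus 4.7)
The plan is to express $h_{\alpha}(\zeta_1(\alpha))$ in closed form and match it directly to the quantity $q\left(4\alpha/(n+2)^2\right)$ appearing in the definition of $\rho_n$.

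The key observation is that $\zeta_1(\alpha)$ and $\zeta_2(\alpha)$ are the roots of the quadratic $z^2-(n+2)z+\alpha$ (this is the factor in parentheses in \eqref{eq:derivative-h} after multiplying by $z^2$). Therefore, by Vieta's formulas, $\zeta_1(\alpha)\zeta_2(\alpha)=\alpha$, so $\alpha/\zeta_1(\alpha)=\zeta_2(\alpha)$, and consequently
$$\zeta_1(\alpha)-\frac{\alpha}{\zeta_1(\alpha)}=\zeta_1(\alpha)-\zeta_2(\alpha)=-(n+2)\sqrt{1-\tfrac{4\alpha}{(n+2)^2}}.$$
This turns the awkward exponent in $h_{\alpha}(z)=\alpha e^{z-\alpha/z}/z^{n+2}$ into something manageable.

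Setting $t:=4\alpha/(n+2)^2\in(0,1)$, so that $\zeta_1(\alpha)=\frac{(n+2)(1-\sqrt{1-t})}{2}$ and $\alpha=(n+2)^2t/4$, I would substitute directly into $h_{\alpha}(\zeta_1(\alpha))$. A short computation yields
$$h_{\alpha}(\zeta_1(\alpha))=\frac{(n+2)^2 t/4}{\left(\frac{n+2}{2}\right)^{n+2}(1-\sqrt{1-t})^{n+2}}\,e^{-(n+2)\sqrt{1-t}}=\left(\frac{2}{n+2}\right)^n\frac{t\,e^{-(n+2)\sqrt{1-t}}}{(1-\sqrt{1-t})^{n+2}}.$$
Reading off the definition $q(t)=(1-\sqrt{1-t})\,t^{-1/(n+2)}e^{\sqrt{1-t}}$ shows that the right-hand factor equals $1/q(t)^{n+2}$, so
$$h_{\alpha}(\zeta_1(\alpha))=\left(\frac{2}{n+2}\right)^n\bigg/q(t)^{n+2}.$$

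Finally, the inequality $h_{\alpha}(\zeta_1(\alpha))\leq 1/n!$ is equivalent (after rearranging and extracting an $(n+2)$-th root) to
$$q(t)\geq\left(\frac{2\sqrt[n]{n!}}{n+2}\right)^{\frac{n}{n+2}}=q(4\rho_n),$$
where the equality is the defining relation of $\rho_n$ in Lemma \ref{lem:rhon}. Since $q$ is strictly increasing on $(0,1]$ by Lemma \ref{lem:rhon}, this is in turn equivalent to $t\geq 4\rho_n$, i.e. $\alpha\geq\rho_n(n+2)^2$, which is the conclusion.

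The computation is purely algebraic, so there is no serious obstacle; the only non-automatic step is recognizing that $\alpha/\zeta_1(\alpha)=\zeta_2(\alpha)$, which is what makes the exponent in $h_{\alpha}(\zeta_1(\alpha))$ collapse to a clean expression in $\sqrt{1-t}$ and causes the whole formula to line up exactly with $q(t)$.
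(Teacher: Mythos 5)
Your proposal is correct and follows essentially the same route as the paper: both use $\zeta_1(\alpha)\zeta_2(\alpha)=\alpha$ (equivalently $\alpha/\zeta_1(\alpha)=n+2-\zeta_1(\alpha)$) to collapse the exponent, rewrite $h_{\alpha}(\zeta_1(\alpha))$ as $\left(\frac{2}{n+2}\right)^n\big/\left(q\left(4\alpha/(n+2)^2\right)\right)^{n+2}$, and then invoke the strict monotonicity of $q$ together with the defining relation of $\rho_n$ from Lemma \ref{lem:rhon}. The only cosmetic difference is that you substitute $t=4\alpha/(n+2)^2$ explicitly, whereas the paper manipulates the expression in terms of $2\zeta_1(\alpha)/(n+2)$; the content is identical.
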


\begin{proof}
Note that
$\frac{\alpha}{\zeta_1(\alpha)}=\zeta_2(\alpha)=n+2-\zeta_1(\alpha)$, therefore
\begin{equation}
\label{eq:by-zeta2}
h_{\alpha}(\zeta_1(\alpha))=\alpha\frac{e^{\zeta_1(\alpha)-\frac{\alpha}{\zeta_1(\alpha)}}}{\zeta_1(\alpha)^{n+2}}=\alpha
\frac{e^{2\zeta_1(\alpha)-(n+2)}}{\zeta_1(\alpha)^{n+2}}
\end{equation}
and hence
\begin{align*}
h_{\alpha}(\zeta_1(\alpha))&=\alpha
\frac{e^{2\zeta_1(\alpha)-(n+2)}}{\zeta_1(\alpha)^{n+2}}=
\left(\frac{2}{n+2}\right)^n\frac{4\alpha}{(n+2)^2}\Bigg/\left(\frac{2\zeta_1(\alpha)}{n+2}e^{1-\frac{2\zeta_1(\alpha)}{n+2}}\right)^{n+2}\\
&=\left(\frac{2}{n+2}\right)^n\Bigg/\left(
q\left(\frac{4\alpha}{(n+2)^2}\right)\right)^{n+2}.
\end{align*}
Hence, since the function 
$q$ is increasing in the interval $(0,1]$, by Lemma \ref{lem:rhon}, it follows that $\alpha\geq\rho_n(n+2)^2$ if and only if
$$
h_{\alpha}(\zeta_1(\alpha))
\leq\left(\frac{2}{n+2}\right)^n\bigg/\left(
q\left(4\rho_n\right)\right)^{n+2}
$$
i.e., $h_{\alpha}(\zeta_1(\alpha))\leq\frac{1}{n!}$, by \eqref{eq:rho}.
\end{proof}

Theorem \ref{thm:exact-Jl} now easily follows.

\begin{proof}[Proof of Theorem \ref{thm:exact-Jl}]
Combining the proof of Lemma \ref{lem:sign-changes-general} with Lemma \ref{lem:h-zeta1} readily imply that for every $\alpha\in[\rho_n(n+2)^2,\infty)$, the function $h_{\alpha}-\frac{1}{n!}$ has at most two positive real roots. 
Proposition \ref{prop:1root} therefore implies the first part of the theorem.
The second part readily follows from the first part, by Observation \ref{obs:J-UpperLowerBound}. 
\end{proof}

\subsection{Proof of Theorem \ref{thm:tight-Jl}}

\begin{lem}\label{lem:Jmaximizers}
For every $\alpha\in(0,\infty)$ there are $r\in[0,1]$ and $t_0\in(0,\infty)$ such that for every  $K\in{\mathcal K}_0\left({\mathbb R}^n\right)$,
$$
\lambda_n(\alpha)= {\mathcal R}_{{\mathcal J}_{\alpha}^l}\left(\psi_{K,r,t_0}\right).
$$
\end{lem}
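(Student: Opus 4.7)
The plan is to introduce $G(r,x_0):=\sl\!\left(\psi_{K,r,x_0}\right)$ and show that its supremum on $[0,1]\times(0,\infty)$ both equals $\lambda_n(\alpha)$ and is attained. By \eqref{eq:psi-p} we have $\widehat{\psi_{K,r,x_0}}=\sqrt[n]{\vol_n(K)}\,p_{r,x_0}$, and since \eqref{eq:int-by-levels} and \eqref{eq:Jint-by-levels} give
\[
\sl(\varphi)=\frac{\int_0^\infty e^{-z}h_\alpha(z)(\hat\varphi(z))^n\d z}{\int_0^\infty e^{-z}(\hat\varphi(z))^n\d z},
\]
which is homogeneous of degree $0$ in $\hat\varphi$, the factor $\sqrt[n]{\vol_n(K)}$ cancels and
\[
G(r,x_0)=\frac{\int_0^\infty e^{-z}h_\alpha(z)(p_{r,x_0}(z))^n\d z}{\int_0^\infty e^{-z}(p_{r,x_0}(z))^n\d z}
\]
is independent of $K$. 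Set $\lambda^*:=\sup_{(r,x_0)\in[0,1]\times(0,\infty)}G(r,x_0)$; trivially $\lambda^*\le\lambda_n(\alpha)$, and $\lambda^*\ge G(1,1)=\sl(\|\cdot\|_K)=\tfrac{1}{n!}$ by \eqref{eq:sl-norm-K}.

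For the reverse inequality $\lambda_n(\alpha)\le\lambda^*$, I fix $\varphi\in\ocvxo$, set $\psi=\hat\varphi$, and apply Lemma~\ref{lem:piecewise} with $f=h_\alpha-\lambda^*$; by Lemma~\ref{lem:sign-changes-general} either case 1 or case 2 holds. In case 1, the bound yields $\int_0^\infty e^{-z}f(z)\psi(z)^n\d z\le\left(\tfrac{\psi(z_0)}{z_0}\right)^{\!n}(1-\lambda^*n!)\le 0$, using $\lambda^*\ge 1/n!$. In case 2, with the $(r,x_0)$ supplied by the lemma, the bound yields $\int_0^\infty e^{-z}f(z)\psi(z)^n\d z\le\left(\tfrac{\psi(z_1)}{z_1}\right)^{\!n}\int_0^\infty e^{-z}f(z)p_{r,x_0}(z)^n\d z\le 0$, using $G(r,x_0)\le\lambda^*$. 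Either way $\sl(\varphi)\le\lambda^*$, so $\lambda_n(\alpha)\le\lambda^*$.

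For attainment, $G$ is continuous on $[0,1]\times(0,\infty)$ by dominated convergence, and a direct computation gives $G\to 1/n!$ as $r\to 1$, as $x_0\to\infty$, or as $x_0\to 0^+$ with $r$ bounded away from $0$. If $\lambda^*=1/n!$, then $(r,x_0)=(1,1)$ is a maximizer. Otherwise $\lambda^*>1/n!$, and I need to rule out that a maximizing sequence escapes to the degenerate corner $(0,0)$. Along any subsequence of a maximizing sequence with $r_m/x_{0,m}\to c\in[0,\infty]$, a direct computation identifies the limit of $G(r_m,x_{0,m})$ as
\[
f(c):=\frac{\int_0^\infty e^{-z}h_\alpha(z)(cz+1)^n\d z}{\int_0^\infty e^{-z}(cz+1)^n\d z}
\]
(with $f(\infty):=1/n!$); this is precisely the Santaló ratio $\sl(\varphi_c)$ of the $\varphi_c\in\ocvxo$ whose volume-radius function is $cz+1$. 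Since for $c\in[0,\infty)$ the affine function $cz+1$ is not of the form $ap_{r,x_0}$ (the latter always vanishes at the origin), the strict-inequality clause in Lemma~\ref{lem:piecewise} yields $f(c)<\lambda^*$, and $f(\infty)=1/n!<\lambda^*$. Continuity of $f$ on the compact interval $[0,\infty]$ then forces $\max_{[0,\infty]}f<\lambda^*$, so no subsequence can converge to $(0,0)$; combined with the other boundary limits being $1/n!<\lambda^*$, compactness of $[0,1]\times[0,\infty]$ forces the convergence to some $(r^*,x_0^*)\in[0,1]\times(0,\infty)$, where $G(r^*,x_0^*)=\lambda^*=\lambda_n(\alpha)$ by continuity.

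The main obstacle is ruling out that $\lambda^*$ is attained only in the degenerate limit $(r,x_0)\to(0,0)$, where $G$ has a one-parameter family $\{f(c)\}_{c\in[0,\infty]}$ of path-dependent limits corresponding to Santaló ratios of \emph{flat} functions with $\hat\varphi(0)>0$ (the value $f(0)=n!/\alpha^n$ recovers $\sl(1^\infty_K)$). These flat functions are precisely those excluded from the admissible family $\{\psi_{K,r,x_0}\}$, and the argument leverages the equality-case clause in Lemma~\ref{lem:piecewise}: a nonzero affine function $cz+1$ is never of the form $ap_{r,x_0}$.
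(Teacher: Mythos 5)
Your proof is correct and yields the same conclusion, but the route to attainment is genuinely different from the paper's. The paper's proof works on the compact domain $[0,1]\times[z_1,z_2]$ from the outset, where $z_1<z_2$ are the two lower roots of $h_\alpha-\tfrac{1}{n!}$; the supremum of $\sigma_\alpha$ on that box is automatically attained, and equality with $\lambda_n(\alpha)$ is then shown by contradiction, crucially using that Lemma~\ref{lem:piecewise}(2) always produces $x_0\in[\tilde z_1,\tilde z_2]\subseteq[z_1,z_2]$ so the contradicting point stays in the box. You instead work on the full non-compact domain $[0,1]\times(0,\infty)$, prove the identity $\sup G=\lambda_n(\alpha)$ directly (without contradiction) by applying Lemma~\ref{lem:piecewise} to an arbitrary $\hat\varphi$ with $f=h_\alpha-\lambda^*$, and then obtain attainment by compactifying to $[0,1]\times[0,\infty]$ and ruling out boundary escape; the delicate corner $(r,x_0)\to(0,0)$ is handled by tracking the ratio $c=r/x_0$ and identifying the path-dependent limits $f(c)$ as Santaló ratios of ``flat'' functions with $\hat\varphi(0)>0$, which are killed by the equality-case clause of Lemma~\ref{lem:piecewise} since $cz+1$ never vanishes at $0$. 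Both approaches are sound: the paper's is shorter because the needed compactness is designed in from the beginning, while yours trades that for a transparent direct proof of $\sup G=\lambda_n(\alpha)$ at the cost of a more elaborate boundary analysis (continuity by dominated convergence, the domination $p_{r,x_0}(z)/x_0\le (2c+1)z+1$, the $[0,\infty]$-compactness of the ratio, and the identification of $f(0)=n!/\alpha^n$, $f(\infty)=1/n!$). One small presentational remark: when invoking the strict-inequality clause you could dispense with it in case~1 of Lemma~\ref{lem:piecewise}, since $\lambda^*>\tfrac{1}{n!}$ already forces the right-hand side to be strictly negative; the strictness is only genuinely needed in case~2.
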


\begin{proof}
If $\lambda_n(\alpha)=\frac{1}{n!}$, then the statement holds for $r=1$ and any $t_0\in(0,\infty)$, by \eqref{eq:sl-norm-K}.
Hence, by \eqref{eq:sl-norm-K}, we assume that $\lambda_n(\alpha)>\frac{1}{n!}$, and hence, $0<\alpha<\rho_n(n+2)^2$, by Theorem \ref{thm:exact-Jl}.
By Proposition \ref{prop:1root}, the function $h_{\alpha}-\frac{1}{n!}$ has three roots $0<z_1<z_2<z_3$. 

Note that for every $K\in{\mathcal K}_0\left({\mathbb R}^n\right)$, $r\in[0,1]$ and $t_0\in(0,\infty)$ it holds by \eqref{eq:Jint-by-levels}, \eqref{eq:int-by-levels} and \eqref{eq:psi-p}, that
\begin{align*}
{\mathcal R}_{{\mathcal J}_{\alpha}^l}\left(\psi_{K,r,t_0}\right)&=\frac{\int_{{\mathbb R}^n} e^{-{\mathcal J}_{\alpha}^l\psi_{K,r,t_0}}}{\int_{{\mathbb R}^n} e^{-\psi_{K,r,t_0}}}=\frac{\int_0^\infty \alpha e^{-\frac\alpha z}z^{-(n+2)}{\rm Vol}_n\left(L_z(\psi_{K,r,t_0})\right)\,{\rm d}z}{\int_0^\infty e^{-z}{\rm Vol}_n\left(L_z(\psi_{K,r,t_0})\right)\,{\rm d}z}\\
&=\frac{\int_0^\infty \alpha e^{-\frac\alpha z}z^{-(n+2)}\left(\left(\psi_{K,r,t_0}\right)^*(z)\right)^n\,{\rm d}z}{\int_0^\infty e^{-z}\left(\left(\psi_{K,r,t_0}\right)^*(z)\right)^n\,{\rm d}z}\\
&=\frac{\int_0^\infty \alpha e^{-\frac\alpha z}z^{-(n+2)}\left(\sqrt[n]{{\rm Vol}_n(K)}\,p_{r,t_0}(z)\right)^n\,{\rm d}z}{\int_0^\infty  e^{-z}\left(\sqrt[n]{{\rm Vol}_n(K)}\,p_{r,t_0}(z)\right)^n\,{\rm d}z}\\
&=\frac{\int_0^\infty \alpha e^{-\frac\alpha z}z^{-(n+2)}\left(p_{r,t_0}(z)\right)^n\,{\rm d}z}{\int_0^\infty  e^{-z}\left(p_{r,t_0}(z)\right)^n\,{\rm d}z},
\end{align*}
i.e.,
\begin{equation}
\label{eq:sJpsi-sigma}
{\mathcal R}_{{\mathcal J}_{\alpha}^l}\left(\psi_{K,r,t_0}\right)=\sigma_{\alpha}(r,t_0),
\end{equation}
where the function $\sigma_{\alpha}:[0,1]\times(0,\infty)\to(0,\infty)$ is defined by 
$$(r,t_0)\mapsto\frac{\int_0^\infty \alpha e^{-\frac\alpha z}z^{-(n+2)}\left(p_{r,t_0}(z)\right)^n\,{\rm d}z}{\int_0^\infty e^{-z}\left(p_{r,t_0}(z)\right)^n\,{\rm d}z}.$$ 

The function $\sigma_{\alpha}$
is continuous,
hence it attains a maximum $\lambda_0$ in the compact domain $[0,1]\times[z_1,z_2]$.

For every $t_0\in(0,\infty)$ and any $K\in{\mathcal K}_0\left({\mathbb R}^n\right)$ it holds, by \eqref{eq:sl-norm-K}, that 
$$
\sigma_{\alpha}(1,t_0)={\mathcal R}_{{\mathcal J}_{\alpha}^l}\left(\psi_{K,1,t_0}\right)={\mathcal R}_{{\mathcal J}_{\alpha}^l}(\lVert\cdot\rVert_K)=\frac{1}{n!}.
$$
Hence, $\frac{1}{n!}\leq\lambda_0$, and obviously $\lambda_0\leq \lambda_n(\alpha)$.
Assume by contradiction that $\lambda_0<\lambda_n(\alpha)$.
Then, the function $h_{\alpha}-\lambda_0$ has three roots $0<\tilde{z}_1<\tilde{z}_2<\tilde{z}_3$, by Proposition \ref{prop:1root}, and there is $\varphi\in\overline{\text{Cvx}}_{0}\left({\mathbb R}^{n}\right)$ such that ${\mathcal R}_{{\mathcal J}_{\alpha}^l}(\varphi)>\lambda_0$.
Clearly, $z_1\leq\tilde{z}_1<\tilde{z}_2\leq z_2$.
The function $\varphi^*:[0,\infty)\to[0,\infty)$ is a concave increasing function.
By the second part of Lemma \ref{lem:piecewise}, there are 
$r\in[0,1]$ and $t_0\in[\tilde{z}_1,\tilde{z}_2]$
such that
$$
\int_0^{\infty} e^{-z}(h_{\alpha}(z)-\lambda_0)\left(\varphi^*(z)\right)^n  \,{\rm d}z
\leq 
\int_0^\infty e^{-z}(h_{\alpha}(z)-\lambda_0)
\Big(\frac{\varphi^*(\tilde{z}_1)}{\tilde{z}_1}
p_{r,t_0}(z)
\Big)^n
{\rm d}z.
$$
Take $K\in{\mathcal K}_0\left({\mathbb R}^n\right)$ such that $\sqrt[n]{{\rm Vol}_n(K)}=\frac{\varphi^*(z_1)}{z_1}$.
Then, by \eqref{eq:psi-p},
$$
\int_0^{\infty} e^{-z}(h_{\alpha}(z)-\lambda_0)\left(\varphi^*(z)\right)^n  \,{\rm d}z
\leq 
\int_0^\infty e^{-z}(h_{\alpha}(z)-\lambda_0)\left(
\psi_{K,r,t_0}^*(z)\right)^n \,{\rm d}z,
$$
i.e., by \eqref{eq:sl-to-h} and \eqref{eq:sJpsi-sigma}, 
\begin{align*}
0&<\left({\mathcal R}_{{\mathcal J}_{\alpha}^l}(\varphi)-\lambda_0\right)\int_{{\mathbb R}^n} e^{-\varphi}\leq\left({\mathcal R}_{{\mathcal J}_{\alpha}^l}(\psi_{K,r,t_0})-\lambda_0\right)\int_{{\mathbb R}^n} e^{-\psi_{K,r,t_0}}\\
&=\left(\sigma_{\alpha}\left(r,t_0\right)-\lambda_0\right)\int_{{\mathbb R}^n} e^{-\psi_{K,r,t_0}}
\leq 0,
\end{align*}
and we arrived to a contradiction, which concludes the proof.
\end{proof}

\begin{lem}\label{lem:Jmax}
There is universal constant $C>0$ such that for every $\alpha\in(1,\rho_n (n+2)^2)$,  
$$\lambda_n(\alpha)\leq Cn\sqrt{n} e^{2\alpha/n}\frac{n!}{\alpha^n}.$$
\end{lem}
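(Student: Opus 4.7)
By Lemma~\ref{lem:Jmaximizers} there exist $r\in[0,1]$ and $x_0\in(0,\infty)$ such that $\lambda_n(\alpha)=\sigma_\alpha(r,x_0)=N(r,x_0)/D(r,x_0)$, where $N$ and $D$ are the numerator and denominator in the explicit expression for $\sigma_\alpha$ obtained in the proof of Lemma~\ref{lem:Jmaximizers}. The plan is to bound $N$ from above and $D$ from below in closed form, combine them, and then invoke the structural constraint on $x_0$ that emerges from the proof of Lemma~\ref{lem:Jmaximizers}, together with Stirling's approximation, to reach the stated form.

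For the numerator, the pointwise bound $p_{r,x_0}(z)\leq rz+(1-r)x_0$ (valid for all $z\geq 0$ since $p_{r,x_0}$ is concave) combined with the substitution $u=\alpha/z$ reduces $N$ to the $n$-th moment of a shifted exponential, which evaluates by the binomial theorem to yield
\[
N(r,x_0)\,\leq\,n!\Bigl(\tfrac{(1-r)x_0}{\alpha}\Bigr)^{\!n}\sum_{j=0}^n \tfrac{t^j}{j!},\qquad t:=\tfrac{r\alpha}{(1-r)x_0}.
\]
For the denominator, splitting the defining integral at $x_0$ and substituting $v=rz+(1-r)x_0$ on $[x_0,\infty)$ gives the exact formula $D(r,x_0)=\gamma(n+1,x_0)+r^n e^{(1-r)x_0/r}\Gamma(n+1,x_0/r)$; discarding the first summand and expanding $\Gamma(n+1,\cdot)$ in its Poisson-type series yields
\[
D(r,x_0)\,\geq\,r^n n!\,e^{-x_0}\sum_{k=0}^n\tfrac{(x_0/r)^k}{k!}.
\]

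Combining the two estimates and using the identity $t(1-r)x_0=r\alpha$ to simplify, one finds
\[
\sigma_\alpha(r,x_0)\,\leq\,\frac{e^{x_0}}{t^n}\cdot\frac{\sum_{j=0}^n t^j/j!}{\sum_{k=0}^n (x_0/r)^k/k!}.
\]
The key auxiliary input, extracted from the proof of Lemma~\ref{lem:Jmaximizers}, is that the optimizing $x_0$ lies in the interval $[\tilde z_1,\tilde z_2]$ between the two smaller roots of $h_\alpha-\lambda_n(\alpha)$; in particular $x_0\leq\zeta_2(\alpha)\leq n+2$. Together with the identity $t\cdot(x_0/r)=\alpha/(1-r)\geq\alpha$, this constrains the two parameters of the partial exponential sums in a way amenable to a sharp comparison; an appropriate balancing of $t$ against $x_0/r$ is expected to produce the exponential factor $e^{2\alpha/n}$, while Stirling's approximation $n!\sim\sqrt{2\pi n}(n/e)^n$ absorbs the residual polynomial overhead into the $n\sqrt{n}$ prefactor.

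The main obstacle is the tight comparison of the two partial exponential sums $\sum_{j=0}^n t^j/j!$ and $\sum_{k=0}^n(x_0/r)^k/k!$ as functions of their respective parameters. A natural route is to observe that each such truncated exponential series is within a factor $n+1$ of its largest single term, reduce the ratio to a single-term comparison, invoke Stirling, and then optimize over the remaining free parameter after using the constraint $x_0\leq\zeta_2(\alpha)$ (or, more refinedly, the equation $h_\alpha(x_0)=\lambda_n(\alpha)$ coming from $x_0\in\{\tilde z_1,\tilde z_2\}$) to eliminate one variable. Tracking all constants through these manipulations to pin down exactly the factor $n\sqrt{n}$ and the exponent $2\alpha/n$ is the most delicate ingredient.
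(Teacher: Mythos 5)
Your set-up is fine as far as it goes: the reduction to $\lambda_n(\alpha)=\sigma_\alpha(r,x_0)$ via Lemma~\ref{lem:Jmaximizers}, the bound $p_{r,x_0}(z)\le rz+(1-r)x_0$, and the resulting closed-form estimates for the numerator and denominator (including the identity $D(r,x_0)=\gamma(n+1,x_0)+r^n e^{(1-r)x_0/r}\Gamma(n+1,x_0/r)$) are all correct. But the proof is not complete, and the missing step is not merely bookkeeping. First, the decisive inequality
$\frac{e^{x_0}}{t^n}\,\sum_{j\le n}t^j/j!\big/\sum_{k\le n}(x_0/r)^k/k!\le Cn\sqrt n\,e^{2\alpha/n}\,n!/\alpha^n$
is only announced ("is expected to produce", "the most delicate ingredient"), never derived. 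Second, the constraints you actually have at your disposal — $x_0\le z_2\le\zeta_2(\alpha)\le n+2$ and $t\cdot(x_0/r)\ge\alpha$ — are too weak for it to be derivable: at feasible parameters with $r\to0$ and $x_0$ of order $n$ (which the interval $[z_1,z_2]$ does not exclude when $\alpha$ is small), your combined bound degenerates to about $e^{x_0}\,n!/\alpha^n$, because discarding $\gamma(n+1,x_0)$ and replacing $p_{r,x_0}$ by the line $rz+(1-r)x_0$ are both very lossy there; this exceeds the claimed bound by a factor of order $e^{n}$ whenever $\alpha=o(n^2)$. To rescue the argument you would need to localize the maximizing $x_0$ (morally $x_0\approx\zeta_1(\alpha)\approx\alpha/(n+2)$), and your suggestion that $x_0$ satisfies $h_\alpha(x_0)=\lambda_n(\alpha)$ is not justified by Lemma~\ref{lem:Jmaximizers}, which only places $x_0$ in $[z_1,z_2]$ for the roots of $h_\alpha-\frac1{n!}$; even granting it, the larger root $\tilde z_2$ would not be ruled out.

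For comparison, the paper avoids the maximizer entirely: Proposition~\ref{prop:1root} applies to \emph{any} $\lambda\ge\frac1{n!}$ for which $h_\alpha-\lambda$ has at most two positive roots, and taking $\lambda=h_\alpha(\zeta_1(\alpha))$ (the local maximum value of $h_\alpha$, which exceeds $\frac1{n!}$ by Lemma~\ref{lem:h-zeta1}) gives at once $\lambda_n(\alpha)\le h_\alpha(\zeta_1(\alpha))$. The bound then follows from \eqref{eq:by-zeta2}, the inequalities $\frac{\alpha}{n+2}<\zeta_1(\alpha)<\frac{2\alpha}{n+2}<\frac{n+2}{2}$, the monotonicity of $x\mapsto e^{2x}x^{-(n+2)}$ on $\bigl(0,\frac{n+2}{2}\bigr]$, and Stirling. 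If you want to keep your route, that is the kind of localization of the relevant level ($\zeta_1(\alpha)\approx\alpha/(n+2)$, not merely $x_0\le n+2$) that your argument is missing.
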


\begin{proof}
By Lemma \ref{lem:h-zeta1},
$h_{\alpha}(\zeta_1(\alpha))>\frac{1}{n!}
$, where
$$
\zeta_1(\alpha)=\frac{n+2}{2}\left(1-\sqrt{1-\frac{4\alpha}{(n+2)^2}}\right)
$$
and the function $h_{\alpha}-h_{\alpha}(\zeta_1(\alpha))$ has exactly two roots, by the proof of Lemma \ref{lem:sign-changes-general}.
Hence, by Proposition \ref{prop:1root} and \eqref{eq:by-zeta2},
\begin{equation*}
\lambda_n(\alpha)\leq h_{\alpha}\left(\zeta_1(\alpha)\right)=\alpha
\frac{e^{2\zeta_1(\alpha)-(n+2)}}{\zeta_1(\alpha)^{n+2}}.
\end{equation*}
Note that
$$
\zeta_1(\alpha)=\frac{2}{1+\sqrt{1-\frac{4\alpha}{(n+2)^2}}}\cdot\frac{\alpha}{n+2},$$
hence,
$$
\frac{\alpha}{n+2}<\zeta_1(\alpha)<\frac{2\alpha}{n+2}<2\rho_n(n+2)<\frac{n+2}{2}.
$$
Therefore, since the function $t\mapsto e^{2t}t^{-(n+2)}$ is decreasing in the interval $(0,\frac{n+2}{2}]$,
\begin{align*}
\lambda_n(\alpha)&\leq\alpha
\frac{e^{2\zeta_1(\alpha)-(n+2)}}{\zeta_1(\alpha)^{n+2}}<\alpha
\frac{e^{\frac{2\alpha}{n+2}-(n+2)}}{(\alpha/(n+2))^{n+2}}=\frac{1}{e^2\alpha}\left(1+\frac{2}{n}\right)^n (n+2)^2 e^{\frac{2\alpha}{n+2}}\left(\frac{n}{e\alpha}\right)^n
\\&<(n+2)^2e^{\frac{2\alpha}{n}}\left(\frac{n}{e\alpha}\right)^n=
\left(1+\frac{2}{n}\right)^2\frac{(n/e)^n\sqrt{n}}{n!}\cdot n\sqrt{n}e^{2\alpha/n}\frac{n!}{\alpha^n}
\end{align*}
and the claim follows since 
$$
\left(1+\frac{2}{n}\right)^2\frac{(n/e)^n\sqrt{n}}{n!}
$$
is bounded above by a universal constant.
\end{proof}

\begin{lem}
\label{lem:lambda-lower}
There is a universal constant $c>0$ such that for every $\alpha\in(1,\infty)$,  
$$\lambda_n(\alpha)\geq\left(1+\frac{c}{n}\right)\frac{n!}{\alpha^n}.$$
\end{lem}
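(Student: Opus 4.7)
The plan is to exhibit a concrete $\varphi\in\ocvxo$ whose Santal\'o ratio $\sl(\varphi)$ lies strictly below $1/n!$ by a factor $1-\Theta(1/n)$, and then use Observation~\ref{obs:J-UpperLowerBound} (which gives $\sup\sl\cdot\inf\sl=1/\alpha^n$) to derive the lower bound on $\lambda_n(\alpha)$. Specifically, I would take $\varphi=\psi_{K,0,x_0}$ for an arbitrary $K\in\Ko$ and $x_0=2\alpha n$, so that $\sl(\varphi)=\sigma_{\alpha}(0,x_0)$ as in the proof of Lemma~\ref{lem:Jmaximizers}. Integration by parts in both integrals defining $\sigma_{\alpha}$, combined with the substitution $u=\alpha/z$ in the numerator, would simplify $\sigma_\alpha(0,x_0)$ to the closed form
\[
\sigma_{\alpha}(0,x_0)=\frac{x_0^n\,\gamma(n,\alpha/x_0)}{\alpha^n\,\gamma(n,x_0)},\qquad \gamma(n,u):=\int_0^u e^{-z}z^{n-1}\d z.
\]

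The key feature of this scaling is that $\alpha/x_0=1/(2n)$ is independent of $\alpha$. For the numerator, the Taylor expansion gives $\gamma(n,1/(2n))=\tfrac{(1/(2n))^n}{n}\bigl(1-\tfrac{1}{2(n+1)}+O(1/n^2)\bigr)$. For the denominator, since $x_0=2\alpha n\geq 2n$ for $\alpha\geq 1$, a standard Chernoff tail bound for the Gamma$(n,1)$ distribution gives $\gamma(n,2\alpha n)\geq(n-1)!\bigl(1-e^{-n(2\alpha-1-\ln(2\alpha))}\bigr)$, with exponent satisfying $2\alpha-1-\ln(2\alpha)\geq 1-\ln 2>\tfrac14$ uniformly for $\alpha\geq 1$. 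Combining,
\[
\sigma_{\alpha}(0,2\alpha n)\leq\frac{1}{n!}\Bigl(1-\tfrac{1}{2(n+1)}+O(1/n^2)+O(e^{-n/4})\Bigr)\leq\frac{1-c/n}{n!}
\]
for a universal $c>0$, uniformly in $\alpha\geq 1$ and $n$ sufficiently large. Observation~\ref{obs:J-UpperLowerBound} then yields
\[
\lambda_n(\alpha)\geq\sl(\Jl\psi_{K,0,x_0})=\frac{1}{\alpha^n\,\sl(\psi_{K,0,x_0})}\geq\frac{n!}{\alpha^n}\cdot\frac{1}{1-c/n}\geq\Bigl(1+\tfrac{c}{n}\Bigr)\frac{n!}{\alpha^n},
\]
which is the claim. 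The finitely many small values of $n$ are handled by direct computation and absorbed into the universal constant.

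The main obstacle is uniform control of the error terms: the choice $x_0=2\alpha n$ is engineered precisely to decouple the small-argument Taylor regime ($\alpha/x_0=1/(2n)$, independent of $\alpha$) from the large-argument Chernoff regime ($x_0=2\alpha n\geq 2n$) so that both $O$-terms are dominated by the leading $\tfrac{1}{2(n+1)}\asymp\tfrac{1}{n}$ uniformly in $\alpha\geq 1$; without such a decoupling, the case $\alpha$ close to $1$ (where the Chernoff exponent is smallest) would be delicate.
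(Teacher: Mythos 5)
Your proposal is correct in substance, but it takes a genuinely different route from the paper. The paper's proof is a transfer argument: it quotes from the earlier work of Florentin and Segal the quantitative bound $s_{\J}\left(\max\{\|\cdot\|_{K},1^{\infty}_{x_0K}\}\right)\ge\left(1+\tfrac{c}{n}\right)n!$ for a suitable $x_0$, observes that $\J$ maps these truncated norms to each other, and then converts this into the bound for $\sl$ using only the pointwise inequality $e^{-\alpha\|\cdot\|_K}\le e^{-\|\cdot\|_K}$ (valid since $\alpha>1$) together with a change of variables — no new asymptotic analysis is needed. You instead make the argument self-contained: you exhibit the explicit witness $\psi_{K,0,2\alpha n}$, reduce $\sl(\psi_{K,0,x_0})=\sigma_{\alpha}(0,x_0)$ to the closed form $\frac{x_0^n\gamma(n,\alpha/x_0)}{\alpha^n\gamma(n,x_0)}$ (I checked the integration by parts; it is correct, and the choice $x_0=2\alpha n$ indeed makes the numerator independent of $\alpha$), prove the deficit $\sl\le\frac{1-c/n}{n!}$, and dualize via Observation \ref{obs:J-UpperLowerBound}. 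In effect you re-derive an $\alpha$-adapted version of the Florentin--Segal estimate rather than importing it, at the cost of the incomplete-Gamma asymptotics; the paper's route is shorter but rests on the external reference. Two points you should tighten: (i) the Chernoff exponent $n/4$ only dominates the main term $\frac{1}{2(n+1)}$ for $n$ roughly $\ge 12$, so the ``finitely many small $n$'' set is not negligible and must be checked with the exact tail of the $\mathrm{Gamma}(n,1)$ distribution (which is much smaller than the Chernoff bound there); (ii) for those small $n$ the verification must still be uniform in $\alpha\in(1,\infty)$ — this does work, but only because $\gamma(n,2\alpha n)$ is increasing in $\alpha$, so the check reduces to the single inequality $n(2n)^n\gamma\left(n,\tfrac{1}{2n}\right)<\gamma(n,2n)/(n-1)!$ with a gap of order $\tfrac1n$ for each such $n$; this monotonicity reduction should be stated explicitly rather than left implicit in the phrase ``direct computation''.
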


\begin{proof}
It was shown in \cite{FlorentinSegal} that there is a universal constant $c>0$ and $t_0\in(0,\infty)$ such that for any $K\in{\mathcal K}_0\left({\mathbb R}^n\right)$,
$$
\mathcal{R}_{\mathcal J}\left(\max\left\{\|\cdot\|_{K}, 1^{\infty}_{t_0 K} \right\}\right)\geq\left(1+\frac{c}{n}\right)n!.
$$
Note that ${\mathcal J}\left(\max\left\{\|\cdot\|_{K}, 1^{\infty}_{t K} \right\}\right)=\max\left\{\|\cdot\|_{tK}, 1^{\infty}_K \right\}$ for every $t\in(0,\infty)$.
It follows that for every $\alpha\in(1,\infty)$ and any $K\in{\mathcal K}_0\left({\mathbb R}^n\right)$,
\begin{align*}
\lambda_n(\alpha)&\geq{\mathcal R}_{{\mathcal J}_{\alpha}^l}\left(\max\left\{\|\cdot\|_{K}, 1^{\infty}_{\alpha t_0 K} \right\}\right)=\frac{\int_{{\mathbb R}^n} e^{-\alpha\max\left\{\|\cdot\|_{\alpha t_0 K}, 1^{\infty}_K \right\}}}{\int_{{\mathbb R}^n} e^{-\max\left\{\|\cdot\|_{K}, 1^{\infty}_{\alpha t_0 K} \right\}}}\\
&=\frac{\int_K e^{-\|\cdot\|_{t_0 K}}}{\int_{\alpha t_0 K} e^{-\|\cdot\|_K}}=\frac{\int_K e^{-\|\cdot\|_{t_0 K}}}{\alpha^n\int_{t_0 K} e^{-\alpha\|\cdot\|_K}}\geq\frac{\int_K e^{-\|\cdot\|_{t_0 K}}}{\alpha^n\int_{t_0 K} e^{-\|\cdot\|_K}}\\
&=\frac{\int_{{\mathbb R}^n} e^{-\max\left\{\|\cdot\|_{t_0 K}, 1^{\infty}_K \right\}}}{\alpha^n\int_{{\mathbb R}^n} e^{-\max\left\{\|\cdot\|_{K}, 1^{\infty}_{t_0 K} \right\}}}=\frac{1}{\alpha^n}\mathcal{R}_{{\mathcal J}}\left(\max\left\{\|\cdot\|_{K}, 1^{\infty}_{t_0 K} \right\}\right)\geq\left(1+\frac{c}{n}\right)\frac{n!}{\alpha^n}.
\qedhere
\end{align*}
\end{proof}

Theorem \ref{thm:tight-Jl} immediately follows by combining Lemma \ref{lem:Jmaximizers}, Lemma \ref{lem:Jmax}, Lemma \ref{lem:lambda-lower} and Observation \ref{obs:J-UpperLowerBound}.

\section{Scaling the $\mathcal A$ transform}
\label{sec:A}

\begin{prop}
\label{prop:Lc-Santalo-upper-even}
For every $\alpha\in(0,\infty)$ and every even $\varphi\in\overline{\text{Cvx}}_{0}\left({\mathbb R}^{n}\right)$ it holds that
\begin{equation*}
\mathcal{P}_{{\mathcal A}_{\alpha}}\left(\varphi\right)\leq \left({\rm Vol}_n\left(B_{2}^{n}\right)\right)^2n!\left(\frac{1}{\sqrt[n]{n!}}+\frac{\sqrt[n]{n!}}{\alpha}\right)^n.
\end{equation*}
\end{prop}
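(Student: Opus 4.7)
The plan is to represent $s_{\A_{\alpha}}(\varphi)$ as a double integral by layer-cake, control the integrand uniformly by combining the level-set inclusion \eqref{eq:A-level-sets} with the even-case Blaschke-Santal\'o inequality, and then evaluate the resulting explicit integral via the binomial theorem.

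First, applying \eqref{eq:int-by-levels} to both $\varphi$ and $\A_{\alpha}\varphi$ and invoking Fubini's theorem gives
$$s_{\A_{\alpha}}(\varphi)=\int_0^\infty\int_0^\infty e^{-t-s}\vol_n(L_t(\varphi))\vol_n(L_s(\A_{\alpha}\varphi))\,dt\,ds.$$
For every $(t,s)\in(0,\infty)^2$, the inclusion \eqref{eq:A-level-sets} yields $\vol_n(L_s(\A_{\alpha}\varphi))\leq(st/\alpha+1)^n\vol_n((L_t(\varphi))^\circ)$. Because $\varphi$ is even, $L_t(\varphi)$ is centrally symmetric; whenever $\vol_n(L_t(\varphi))>0$ it is a centrally symmetric convex body with $0$ in its interior, so the Blaschke-Santal\'o inequality gives $\vol_n(L_t(\varphi))\vol_n((L_t(\varphi))^\circ)\leq(\vol_n(B_2^n))^2$; the degenerate case contributes zero to the integrand. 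Combining these two inequalities produces the uniform pointwise bound
$$\vol_n(L_t(\varphi))\vol_n(L_s(\A_{\alpha}\varphi))\leq\left(\frac{st}{\alpha}+1\right)^n\left(\vol_n(B_2^n)\right)^2.$$

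Substituting into the double integral, expanding $(st/\alpha+1)^n$ binomially, and using $\int_0^\infty e^{-t}t^k\,dt=k!$ in each variable reduces the estimate to
$$s_{\A_{\alpha}}(\varphi)\leq\left(\vol_n(B_2^n)\right)^2\sum_{k=0}^n\binom{n}{k}\frac{(k!)^2}{\alpha^k}.$$
The remaining step is to replace $(k!)^2$ by $(n!)^{2k/n}$. Since $\log i$ is nondecreasing in $i$, the running average $\frac{1}{k}\sum_{i=1}^k\log i=\frac{1}{k}\log(k!)$ is nondecreasing in $k$, so $(k!)^{1/k}\leq(n!)^{1/n}$ for every $1\leq k\leq n$, and hence $(k!)^2\leq(n!)^{2k/n}$ for every $0\leq k\leq n$. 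Consequently the sum is bounded by
$$\sum_{k=0}^n\binom{n}{k}\frac{(n!)^{2k/n}}{\alpha^k}=\left(1+\frac{(n!)^{2/n}}{\alpha}\right)^n=n!\left(\frac{1}{\sqrt[n]{n!}}+\frac{\sqrt[n]{n!}}{\alpha}\right)^n,$$
which is the desired inequality.

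I do not expect any real obstacle: the geometric content is compressed into the single use of \eqref{eq:A-level-sets} and the even-case Santal\'o inequality, and once these are combined \emph{before} integration (rather than attempting a pointwise optimal choice $t=t(s)$) the rest is a Gamma-integral calculation and an elementary AM-GM bound on $(k!)^{1/k}$.
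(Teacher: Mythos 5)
Your proof is correct and follows essentially the same route as the paper: combine the level-set inclusion \eqref{eq:A-level-sets} with the symmetric Blaschke--Santal\'o inequality to bound the product of level-set volumes by $\left(\frac{ts}{\alpha}+1\right)^n\left(\vol_n(B_2^n)\right)^2$, then integrate, expand binomially, and use the monotonicity of $(k!)^{1/k}$ together with the binomial theorem. The only cosmetic difference is your explicit handling of degenerate level sets, which the paper leaves implicit.
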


\begin{proof}
One can repeat the proof of \cite[Proposition 7]{ArtSlom} (or Lehec's argument, using Ball's inequality, which is illustrated in the remark following the proof) pretty much verbatim. 
However, we will use a slightly more straightforward estimate. 

By \eqref{eq:A-level-sets} and Santaló inequality for symmetric convex
bodies, we have that for every $t,s\in(0,\infty)$,
\begin{multline*}
{\rm Vol}_n\left(L_{t}\left(\varphi\right)\right){\rm Vol}_n\left(L_{s}\left({\mathcal A}_{\alpha}\varphi\right)\right)  \le{\rm Vol}_n\left(L_{t}\left(\varphi\right)\right){\rm Vol}_n\left(\left(\frac{ts}{\alpha}+1\right)\left(L_{t}\left(\varphi\right)\right)^{\circ}\right)\\
=\left(\frac{ts}{\alpha}+1\right)^{n}{\rm Vol}_n\left(L_{t}\left(\varphi\right)\right){\rm Vol}_n\left(L_{t}\left(\varphi\right)^{\circ}\right)\le\left(\frac{ts}{\alpha}+1\right)^{n}\left({\rm Vol}_n\left(B_{2}^{n}\right)\right)^{2}.
\end{multline*}
Therefore, by \eqref{eq:int-by-levels}
\begin{align*}
\mathcal{P}_{{\mathcal A}_{\alpha}}\left(\varphi\right)&=\int_{{\mathbb R}^n} e^{-\varphi}\int_{{\mathbb R}^n} e^{-{\mathcal A}_{\alpha}\varphi}=\int_{0}^{\infty}e^{-t}{\rm Vol}_n\left(L_{t}\left(\varphi\right)\right)\,{\rm d}t \int_{0}^{\infty}e^{-s}{\rm Vol}_n\left(L_{s}\left({\mathcal A}_{\alpha}\varphi\right)\right)\,{\rm d}s\\
&=\int_{0}^{\infty}\int_{0}^{\infty}e^{-t}e^{-s}{\rm Vol}_n\left(L_{t}\left(\varphi\right)\right){\rm Vol}_n\left(L_{s}\left({\mathcal A}_{\alpha}\varphi\right)\right)\,{\rm d}t \,{\rm d}s\\
&\leq\int_{0}^{\infty}\int_{0}^{\infty}e^{-t}e^{-s}\left(\frac{ts}{\alpha}+1\right)^{n}\left({\rm Vol}_n\left(B_{2}^{n}\right)\right)^{2}\,{\rm d}t \,{\rm d}s.
\end{align*}
The claim follows since, by using that the sequence $(k!^{1/k})_{k=1}^{\infty}$ is increasing,
\begin{align*}
\int_{0}^{\infty}\int_{0}^{\infty}&e^{-t}e^{-s}\left(\frac{ts}{\alpha}+1\right)^{n}\,{\rm d}t \,{\rm d}s= \int_{0}^{\infty}\int_{0}^{\infty}e^{-t}e^{-s}\sum_{k=0}^{n}\binom{n}{k}\frac{t^k s^k}{\alpha^k}\,{\rm d}t \,{\rm d}s\\
& =\sum_{k=0}^{n}{\binom{n}{k}}\frac{1}{\alpha^k}\int_{0}^{\infty}e^{-t}t^k \,{\rm d}t\int_{0}^{\infty}e^{-s}s^k \,{\rm d}s =\sum_{k=0}^{n}{\binom{n}{k}}\frac{k!^2}{\alpha^k}\leq\sum_{k=0}^{n}{\binom{n}{k}}\frac{\left(n!^{k/n}\right)^2}{\alpha^k}\\&= \sum_{k=0}^{n}{\binom{n}{k}}\frac{\left(n!^{2/n}\right)^k}{\alpha^k}=\left(1+\frac{n!^{2/n}}{\alpha}\right)^n=n!\left(\frac{1}{\sqrt[n]{n!}}+\frac{\sqrt[n]{n!}}{\alpha}\right)^n.
\qedhere
\end{align*}
\end{proof}

\begin{proof}[Proof of Theorem \ref{thm:Lc-Santalo}]
Note that
for every $\alpha\in(0,\infty)$ and 
$\varphi\in\overline{\text{Cvx}}_{0}\left({\mathbb R}^{n}\right)$,  
\begin{equation}\label{eq:sAsJsL}
\mathcal{P}_{{\mathcal A}_{\alpha}}\left(\varphi\right)={\mathcal R}_{{\mathcal J}_{\alpha}^l}({\mathcal L}\varphi)\left(\int_{{\mathbb R}^n} e^{-{\mathcal L}\varphi}\int_{{\mathbb R}^n} e^{-\varphi}\right)
\end{equation}
(similar observation was already made in \cite[\S 2]{FlorentinSegal}).
It follows, by \eqref{eq-ML}, that for every $\alpha\in(0,\infty)$,
\begin{equation*}
\inf_{\substack{\varphi\in\overline{\text{Cvx}}_{0}\left({\mathbb R}^{n}\right),\\{\rm bar}\left(e^{-\varphi}\right)=0}}\mathcal{P}_{{\mathcal A}_{\alpha}}\left(\varphi\right)\geq\inf_{\varphi\in\overline{\text{Cvx}}_{0}\left({\mathbb R}^{n}\right)}\mathcal{P}_{{\mathcal A}_{\alpha}}\left(\varphi\right)\geq c_{\mathcal L}^n\inf_{\varphi\in\overline{\text{Cvx}}_{0}\left({\mathbb R}^{n}\right)}{\mathcal R}_{{\mathcal J}_{\alpha}^l}(\varphi),
\end{equation*}
and the lower bounds in \eqref{eq:Lc-Santalo-large-alpha} and \eqref{eq:Lc-Santalo-small-alpha} immediately follow from the second part of Theorem \ref{thm:exact-Jl} and the lower bound in Theorem \ref{thm:tight-Jl}, respectively.

For every $\varphi\in\overline{\text{Cvx}}_{0}\left({\mathbb R}^{n}\right)$ such that ${\rm bar}\left(e^{-\varphi}\right)=0$, it holds, by \cite[Theorem 1.3]{AKM}, that
\begin{equation*}
\int_{{\mathbb R}^n} e^{-{\mathcal L}\varphi}\int_{{\mathbb R}^n} e^{-\varphi}\leq(2\pi)^n
\end{equation*}
and it follows, by \eqref{eq:sAsJsL}, that for every $\alpha\in(0,\infty)$, 
\begin{equation*}
\sup_{\substack{\varphi\in\overline{\text{Cvx}}_{0}\left({\mathbb R}^{n}\right),\\{\rm bar}\left(e^{-\varphi}\right)=0}}\mathcal{P}_{{\mathcal A}_{\alpha}}\left(\varphi\right)\leq(2\pi)^n\sup_{\varphi\in\overline{\text{Cvx}}_{0}\left({\mathbb R}^{n}\right)}{\mathcal R}_{{\mathcal J}_{\alpha}^l}(\varphi).
\end{equation*}
The upper bounds in \eqref{eq:Lc-Santalo-large-alpha} and \eqref{eq:Lc-Santalo-small-alpha} immediately follow from the first part of Theorem \ref{thm:exact-Jl} and the upper bound in Theorem \ref{thm:tight-Jl}, respectively.

Proposition \ref{prop:Lc-Santalo-upper-even} completes the proof of the Theorem.
\end{proof}

\section{A  K\"{o}nig-Milman duality of entropy result for $\mathcal A$}
\label{sec:MP-KM}

The proof of Theorem \ref{thm:KM_polarity}  mainly relies  on  Theorem \ref{thm:Lc-Santalo} but requires some additional preparation. 

Recall the operation of inf-convolution of two  convex functions $\varphi,\psi\in{\rm Cvx}({\mathbb R}^n)$, defined by 
$$
(\varphi\square\psi)\left(x\right):=\inf_{y\in{\mathbb R}^n}\left(\varphi\left(y\right)+\psi\left(x-y\right)\right).
$$
The g-inf-convolution of two geometric convex functions $\varphi,\psi\in{\rm Cvx}_0({\mathbb R}^n)$ is given by 
$$
\varphi\boxdot\psi:={\mathcal J}\left({\mathcal J}\varphi\,\square\,{\mathcal J}\psi\right).
$$
It is known that the polarity transform exchanges the operation of g-inf-convolution with  usual summation, that is 
${\mathcal A} (\varphi\boxdot \psi)={\mathcal A}\varphi+{\mathcal A}\psi$, see e.g., \cite{AM-Hidden}. Hence, also 
\begin{equation}
\label{eq:boxdot-A-sum}
 {\mathcal A}_{\alpha} (\varphi\boxdot \psi)={\mathcal A}_{\alpha}\varphi+ {\mathcal A}_{\alpha}\psi,   
\end{equation}
which we will use in the sequel. Since ${\mathcal A}_{\alpha}$ is an order reversing involution on ${\rm Cvx}_0({\mathbb R}^n)$, it follows that $\varphi\boxdot\psi\le\varphi$, which will also be useful.

Rogers and Shephard proved in \cite{rogers1958some} that ${\rm Vol}_n(K-K)/{\rm Vol}_n(K)\leq\binom{2n}{n}$ for every convex body $K\subseteq{\mathbb R}^n$.
The following proposition is a qualitative Rogers-Shephard type inequality for geometric log-concave functions. 

\begin{prop}\label{prop:f_ginf}
There is a universal constant $C$ such that for every $\varphi\in\overline{\text{Cvx}}_{0}\left({\mathbb R}^{n}\right)$,  
\[
\frac{\int_{{\mathbb R}^n} e^{-\varphi\boxdot\varphi^-}}{\int_{{\mathbb R}^n} e^{-\varphi}}\le 8^{n},
\]
where $\varphi^-\in\overline{\text{Cvx}}_{0}\left({\mathbb R}^{n}\right)$ is defined by $\varphi^-(x):=\varphi(-x)$ for every $x\in{\mathbb R}^n$.
\end{prop}
Functional inequalities in this spirit, as well as their optimal bounds,  have been investigated in \cite{AAGJ19}, \cite{AloGonJimVil16} and  \cite{Colesanti05}. In our case, Proposition \ref{prop:f_ginf} is a consequence of \cite[Theorem 2.2]{AloGonJimVil16} and the next lemma. It would be interesting to find the optimal constant $0<c\le8$ for which the proposition holds. 

\begin{lem}
\label{lem:inf_vs_ginf} If  $\varphi,\psi\in{\rm Cvx}_0({\mathbb R}^n)$ then for all $x\in{\mathbb R}^{n}$ we have
\[
2\left(\varphi\boxdot\psi\right)\left(x/2\right)\le\left(\varphi\square\psi\right)\left(x\right)\le2\left(\varphi\boxdot\psi\right)\left(x\right).
\]
\end{lem}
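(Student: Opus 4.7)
The plan is to first derive an explicit variational formula for the g-inf-convolution and then verify each inequality by exhibiting an explicit competitor.

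Starting from $\varphi \boxdot \psi = \J(\J\varphi\,\square\,\J\psi)$, I would combine the standard identity $\epi(f\,\square\,g) = \epi f + \epi g$ with the epigraphic description $\epi(\J f) = F(\epi f)$, where $F(x,z) = (x/z, 1/z)$. Unwinding the condition $(x,t) \in \epi(\varphi \boxdot \psi) = F\bigl(F(\epi\varphi) + F(\epi\psi)\bigr)$ leads to the existence of pairs $(a_i,s_i) \in \epi\varphi_i$ satisfying $1/s_1 + 1/s_2 = 1/t$ and $a_1/s_1 + a_2/s_2 = x/t$; substituting $\lambda = s_2/(s_1+s_2)$ and $t = s_1 s_2/(s_1+s_2)$ simplifies these constraints to
\begin{equation*}
(\varphi \boxdot \psi)(x) = \inf_{\substack{\lambda \in (0,1),\ a_1,a_2 \in \R^n \\ x = \lambda a_1 + (1-\lambda) a_2}} \max\left\{\lambda \varphi(a_1),\ (1-\lambda) \psi(a_2)\right\}.
\end{equation*}
Establishing this formula cleanly is the technical core of the proof; boundary cases such as $s_i = 0$ or $\lambda \in \{0,1\}$ should be handled through standard limiting arguments, exploiting that the quantities involved are nonnegative and that $\cvxo$ is closed under the relevant operations.

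With the formula in hand, the right inequality is immediate. Given a near-optimal triple $(\lambda, a_1, a_2)$ for $(\varphi \boxdot \psi)(x)$, set $b_1 = \lambda a_1$ and $b_2 = (1-\lambda) a_2$, so that $b_1 + b_2 = x$. Because $\varphi, \psi \in \cvxo$ vanish at the origin, their convexity gives $\varphi(b_1) \le \lambda \varphi(a_1)$ and $\psi(b_2) \le (1-\lambda) \psi(a_2)$, each bounded by $(\varphi \boxdot \psi)(x)$. Summing yields $(\varphi \square \psi)(x) \le \varphi(b_1) + \psi(b_2) \le 2(\varphi \boxdot \psi)(x)$.

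The left inequality follows from the formula applied with a symmetric choice. For any decomposition $x = b_1 + b_2$, plug $\lambda = 1/2$, $a_1 = b_1$, $a_2 = b_2$ into the variational formula at the point $x/2$; this is admissible since $\tfrac{1}{2}(b_1 + b_2) = x/2$. One obtains $(\varphi \boxdot \psi)(x/2) \le \tfrac{1}{2}\max\{\varphi(b_1), \psi(b_2)\} \le \tfrac{1}{2}(\varphi(b_1) + \psi(b_2))$, and taking the infimum over all decompositions yields $2(\varphi \boxdot \psi)(x/2) \le (\varphi \square \psi)(x)$, completing the proof. The only real obstacle is Step~1; once the variational formula is secured, both inequalities are one-line applications of elementary inequalities.
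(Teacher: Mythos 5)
Your argument is correct, but it splits differently from the paper's. The variational formula you spend most of your effort deriving is precisely \cite{AFS20}*{Proposition 2.2}, which the paper simply quotes; your epigraph computation (adding $F(\epi(\varphi))+F(\epi(\psi))$ and substituting $\lambda=s_2/(s_1+s_2)$, $t=s_1s_2/(s_1+s_2)$) is a legitimate way to reprove it, but note that the identity $\epi(f\,\square\,g)=\epi(f)+\epi(g)$ holds only up to attainment of the infimum (it is exact for strict epigraphs), so the limiting arguments you defer are genuinely needed there; citing the proposition, as the paper does, shortcuts this. Given the formula, your proof of the left inequality (take $\lambda=\tfrac12$, $a_i=b_i$ at the point $x/2$, bound the max by the sum, and take the infimum over decompositions $x=b_1+b_2$) is the same argument as the paper's. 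Your proof of the right inequality is genuinely different: you use the variational formula a second time, pushing the weights inside via $\varphi(\lambda a_1)\le\lambda\varphi(a_1)$ (convexity together with $\varphi(0)=0$), bounding the sum by twice the max for a near-optimal triple. The paper instead deduces the right inequality from the left one by duality: it applies the already-proved inequality to $\J\varphi,\J\psi$, invokes the scaling identity \eqref{eq:Jscaling}, and uses that $\J$ is an order preserving involution. Your route is more elementary and self-contained (no scaling identity, only geometricity of $\varphi,\psi$), whereas the paper's route buys economy, using the variational formula only once and getting the second inequality for free from the involution structure; both are valid.
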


\begin{proof}
By \cite[Proposition 2.2]{AFS20}, we have for all $x\in{\mathbb R}^n$, 
\begin{equation*}
(\varphi\boxdot\psi)\left(x\right)=\inf_{\substack{0<t<1,\,y,z\in{\mathbb R}^n\\\left(1-t\right)y+tz=x}}
\max\left\{ \left(1-t\right)\varphi\left(y\right),t\psi\left(z\right)\right\}\le\inf_{y\in{\mathbb R}^n}\max\left\{ \frac{1}{2}\varphi\left(y\right),\frac{1}{2}\psi\left(2x-y\right)\right\}.
\end{equation*}
Therefore, for every $x\in{\mathbb R}^n$,
\begin{equation*}
\left(\varphi\,\square\,\psi\right)\left(x\right)=\inf_{y\in{\mathbb R}^n}\left(\varphi\left(y\right)+\psi\left(x-y\right)\right)\ge\inf_{y\in{\mathbb R}^n}\max\left\{ \varphi\left(y\right),\psi\left(x-y\right)\right\}
\ge 2\left(\varphi\boxdot\psi\right)\left(x/2\right).
\end{equation*}
By applying the above inequality to the functions ${\mathcal J}\varphi$
and ${\mathcal J}\psi$ we obtain that for every $x\in{\mathbb R}^n$
\[
\left({\mathcal J}\varphi\,\square\,{\mathcal J}\psi\right)\left(x\right)\ge 2\left({\mathcal J}\varphi\,\boxdot\,{\mathcal J}\psi\right)\left(x/2\right),
\]
hence, by \eqref{eq:Jscaling},
$$
\left({\mathcal J}\varphi\,\square\,{\mathcal J}\psi\right)\left(x\right)\ge \left({\mathcal J}\frac{1}{2}{\mathcal J}\left({\mathcal J}\varphi\,\boxdot\,{\mathcal J}\psi\right)\right)\left(x\right)=\left({\mathcal J}\frac{1}{2}\left(\varphi\,\square\,\psi\right)\right)\left(x\right).
$$
Therefore, since $\mathcal J$ is an order preserving involution, it holds for every $x\in{\mathbb R}^n$ that, 
\begin{equation*}
\left(\varphi\boxdot\psi\right)\left(x\right)=\left({\mathcal J}\left({\mathcal J}\varphi\,\square\,{\mathcal J}\psi\right)\right)\left(x\right)\ge\frac{1}{2}\left(\varphi\,\square\,\psi\right)\left(x\right).
\qedhere
\end{equation*}
\end{proof}

\begin{proof}[Proof of Proposition \ref{prop:f_ginf}]
Note that for every function $\phi\in\overline{\text{Cvx}}_{0}\left({\mathbb R}^{n}\right)$ one has 
\begin{equation}
\int_{{\mathbb R}^n} e^{-2\phi}\le\int_{{\mathbb R}^n} e^{-\phi}\le \int_{{\mathbb R}^n} e^{-2\phi(\cdot/2)} =2^{n}\int_{{\mathbb R}^n} e^{-2\phi}.\label{eq:func_KM_square}
\end{equation}
Indeed, the left hand side inequality follows from the fact that $\phi\ge0$
and the right hand side inequality follows from the fact that $\phi(x)\ge 2\phi(x/2)$ for any $x\in{\mathbb R}^n$, which holds  since $\phi$ is convex and $\phi(0)=0$. 
By \cite[Theorem 2.2]{AloGonJimVil16}, we have 
$$
\frac{\int_{{\mathbb R}^n} e^{-\varphi\square\varphi^-}}{\int_{{\mathbb R}^n} e^{-\varphi}}\le \binom{2n}{n}\le 4^n
$$
and hence, by Lemma \ref{lem:inf_vs_ginf} and \eqref{eq:func_KM_square}, it follows that 
\begin{equation*}
\frac{\int_{{\mathbb R}^n} e^{-\varphi\boxdot\varphi^-}}{\int_{{\mathbb R}^n} e^{-\varphi}}\le\frac{\int_{{\mathbb R}^n} e^{-\frac{1}{2}\varphi\square\varphi^-}}{\int_{{\mathbb R}^n} e^{-\varphi}}\le 2^n\frac{\int_{{\mathbb R}^n} e^{-\varphi\square\varphi^-}}{\int_{{\mathbb R}^n} e^{-\varphi}}\le 8^n. \qedhere
\end{equation*}
\end{proof}

We also need the following estimates for covering numbers of geometric log-concave functions from 
\cite{ArtSlom17-func_cov}, which hold true for any $\varphi,\psi\in\overline{\text{Cvx}}_{0}\left({\mathbb R}^{n}\right)$: 
\begin{equation}\label{eq:cov_vol_bd_even}
\frac{\int_{{\mathbb R}^n} e^{-2\varphi}}{\int_{{\mathbb R}^n} e^{-(\varphi+\psi)}}\le N\left(e^{-\varphi},e^{-\psi}\right)\le2^{n}\frac{\int_{{\mathbb R}^n} e^{-2\varphi}}{\int_{{\mathbb R}^n} e^{-(\varphi+\psi)}},
\end{equation}
\begin{equation}\label{eq:cov_vol_bd_infc}
N\left(e^{-\varphi},e^{-\psi}\right)\le\frac{\int_{{\mathbb R}^n} e^{-\varphi\square\psi^-}}{\int_{{\mathbb R}^n} e^{-2\psi^-}}.
\end{equation}
Combined with Lemma \ref{lem:inf_vs_ginf} and \eqref{eq:func_KM_square}, we also have: 
\begin{equation}\label{eq:cov_vol_bd_ginf}
N\left(e^{-\varphi},e^{-\psi}\right)\le\frac{\int_{{\mathbb R}^n} e^{-2(\varphi\boxdot\psi^-)(\cdot/2)}}{\int_{{\mathbb R}^n} e^{-2\psi^-}} =2^n\frac{\int_{{\mathbb R}^n} e^{-\varphi\boxdot\psi^-}}{\int_{{\mathbb R}^n} e^{-2\psi^-}}.
\end{equation}

\begin{proof}[Proof of Theorem \ref{thm:KM_polarity}]
Assume first that $\varphi$ and $\psi$ are both even functions. By (\ref{eq:cov_vol_bd_ginf}) and \eqref{eq:func_KM_square} we see that 
\[
N(e^{-{\mathcal A}_{\alpha}\varphi},e^{-{\mathcal A}_{\alpha}\psi})\le\frac{\int_{{\mathbb R}^n} e^{-2\left(\mathcal{A}_{\alpha}\varphi\boxdot\mathcal{A}_{\alpha}\psi\right)\left(x/2\right)}\,{\rm d}x}{\int_{{\mathbb R}^n} e^{-2{\mathcal A}_{\alpha}\psi}}\le4^{n}\frac{\int_{{\mathbb R}^n} e^{-\left(\mathcal{A}_{\alpha}\varphi\boxdot\mathcal{A}_{\alpha}\psi\right)}}{\int_{{\mathbb R}^n} e^{-{\mathcal A}_{\alpha}\psi}}.
\]
As ${\mathcal A}_{\alpha}({\mathcal A}_{\alpha}\varphi\boxdot{\mathcal A}_{\alpha}\psi)=\varphi+\psi$ and the barycenters of $e^{-\psi}$ and $e^{-(\varphi+\psi)}$ are at the origin, it follows by 
Theorem \ref{thm:Lc-Santalo} that 
\[
 N(e^{-{\mathcal A}_{\alpha}\varphi},e^{-{\mathcal A}_{\alpha}\psi})\le C^n \frac{\int_{{\mathbb R}^n} e^{-\psi}}{\int_{{\mathbb R}^n} e^{-(\varphi+\psi)}}
\]
for some universal constant $C>0$. By (\ref{eq:func_KM_square}) and
(\ref{eq:cov_vol_bd_even}), we obtain 
\begin{equation*}
N(e^{-{\mathcal A}_{\alpha}\varphi},e^{-{\mathcal A}_{\alpha}\psi})\le C^{n} \frac{\int_{{\mathbb R}^n} e^{-\psi}}{\int_{{\mathbb R}^n} e^{-(\varphi+\psi)}}\le (2C)^n \frac{\int_{{\mathbb R}^n} e^{-2\psi}}{\int_{{\mathbb R}^n} e^{-(\varphi+\psi)}}\le (2C)^n N(e^{-\psi}, e^{-\varphi}).
\end{equation*}
To obtain the opposite inequality, one simply replaces 
$\varphi$ with ${\mathcal A}_{\alpha}\psi$ and $\psi$ with ${\mathcal A}_{\alpha}\varphi$, which  completes our proof in the case that $\varphi$ and $\psi$ are even functions. 

Next, we  prove the general case where  $\varphi$ and $\psi$ are not assumed to be
even. Using the relations ${\mathcal A}_{\alpha}(\varphi\boxdot\varphi^-)={\mathcal A}_{\alpha}\varphi+({\mathcal A}_{\alpha}\varphi)^-$ and ${\mathcal A}_{\alpha}(\psi+\psi^-)={\mathcal A}_{\alpha}\psi \boxdot({\mathcal A}_{\alpha}\psi)^-$, it is clearly enough to prove that 
\begin{equation}\label{eq:KM_even_reduction}
N(e^{-\varphi},e^{-\psi})\le N(e^{-\varphi\boxdot\varphi^-},e^{-(\psi+\psi^-)})\le C^n N(e^{-\varphi},e^{-\psi})
\end{equation}
for some universal $C>0$, as this reduces the problem to the case where both functions are even, which is resolved. 

To see the first inequality in \eqref{eq:KM_even_reduction}, note that  $\varphi\ge\varphi\boxdot\varphi^-$ and
$\psi+\psi^-\ge \psi$, and use the  monotonicity property of covering numbers \cite[Fact 2.6]{ArtSlom17-func_cov}.

To see the second inequality in \eqref{eq:KM_even_reduction},  we first use the sub-multiplicativity property of covering numbers \cite[Fact 2.10]{ArtSlom17-func_cov} to get
$$
N(e^{-\varphi\boxdot\varphi^-},e^{-(\psi+\psi^-)})\le N(e^{-\varphi\boxdot\varphi^-}, e^{-\varphi})\cdot N(e^{-\varphi}, e^{-\psi})\cdot N(e^{-\psi},e^{-(\psi+\psi^-)}).
$$
To bound the term $N(e^{-\varphi\boxdot\varphi^-}, e^{-\varphi})$,  we use \eqref{eq:cov_vol_bd_even}, the fact that $\varphi+(\varphi\boxdot\varphi^-)\le2\varphi$
and \eqref{eq:func_KM_square} to get 
\begin{equation*}
N(e^{-\varphi\boxdot\varphi^-}, e^{-\varphi})\le 2^n\frac{\int_{{\mathbb R}^n}{e^{-2(\varphi\boxdot\varphi^-)}}}{\int_{{\mathbb R}^n}{e^{-(\varphi+\varphi\boxdot\varphi^-)}}}\le
2^n\frac{\int_{{\mathbb R}^n}{e^{-2(\varphi\boxdot\varphi^-)}}}{\int_{{\mathbb R}^n}{e^{-2\varphi}}}
\le 8^n\frac{\int_{{\mathbb R}^n}{e^{-\varphi\boxdot\varphi^-}}}{\int_{{\mathbb R}^n}{e^{-\varphi}}}.
\end{equation*}
By Proposition \ref{prop:f_ginf}, it follows that $N(e^{-\varphi\boxdot\varphi^-}, e^{-\varphi})\le 8^{2n}$. 
To bound the term $N(e^{-\psi},e^{-(\psi+\psi^-)})$, we use \eqref{eq:cov_vol_bd_infc},  \eqref{eq:func_KM_square} to get
\begin{equation*}
N(e^{-\psi},e^{-(\psi+\psi^-)})\le
2^n\frac{\int_{{\mathbb R}^n}{e^{-2\psi}}}{\int_{{\mathbb R}^n}{e^{-(2\psi+\psi^-)}}}\le
2^n\frac{\int_{{\mathbb R}^n}{e^{-2\psi}}}{\int_{{\mathbb R}^n}{e^{-2(\psi+\psi^-)}}}\le 8^n\frac{\int_{{\mathbb R}^n}{e^{-\psi}}}{\int_{{\mathbb R}^n}{e^{-(\psi+\psi^-)}}}.
\end{equation*}
By  Theorem \ref{thm:Lc-Santalo} and Proposition \ref{prop:f_ginf}, it follows that for some  universal  $C_1>0$, 
$$
N(e^{-\psi},e^{-(\psi+\psi^-)})\le C_1^n \frac{\int_{{\mathbb R}^n} e^{-(({\mathcal A}_\alpha\psi)\boxdot(A_\alpha\psi)^-)}}{\int_{{\mathbb R}^n} e^{-{\mathcal A}_\alpha\psi}}
\le (8C_1)^n.
$$
Concluding the above, we obtain that 
$$
N(e^{-\varphi\boxdot\varphi^-},e^{-(\psi+\psi^-)})\le (8^{3}C_1)^n\cdot N(e^{-\varphi},e^{-\psi}),
$$
as needed. 
\end{proof}

\bibliographystyle{amsplain}
\bibliography{references}

\vspace*{1cm}

\end{document}